\documentclass[12pt,draftcls,onecolumn]{IEEEtran}
\usepackage{graphicx,amsmath,amssymb,amsthm}
 \usepackage{color}
\newtheorem{prop}{Proposition}
\newtheorem{thm}{Theorem}

\newtheorem{rem}{Remark}
\newtheorem{example}{Example}
\newtheorem{defi}{Definition}
\usepackage{amsmath}
\usepackage{amssymb}

\newcommand{\kw}{\rule{2mm}{2mm}}
\newcommand{\ged}{\hfill\kw}

\begin{document}
\title{Risk-Sensitive Mean Field Games}
\author{Hamidou Tembine, Quanyan Zhu, Tamer Ba\c{s}ar
\thanks{We are grateful to many seminar and conference participants such as those in the Workshop on Mean Field Games (Rome, Italy, May 2011) and IFAC World Congress (Milan, Italy, August-September 2011) for their valuable comments and suggestions on the preliminary versions of this work. }
\thanks{ An earlier version of this work appeared in the Proceedings of 18th IFAC World Congress (Milan, Italy; August 29 - September 2, 2011).
Research of second and third authors was supported  in part by the U.S. Air Force Office of Scientific Research (AFOSR) under the MURI Grant FA9550-10-1-0573. The first author acknowledges the financial support from the CNRS mean-field game project ``MEAN-MACCS".
}

\thanks{H. Tembine is with Ecole Sup\'erieure d'Electricit\'e (SUPELEC), France.
        {\tt\small E-mail: tembine@ieee.org}}
        \thanks{
        Q. Zhu and T. Ba\c{s}ar are with Coordinated  Science Laboratory and the Department of Electrical and Computer Engineering, University of Illinois at Urbana-Champaign, Urbana, IL, USA.  {\tt\small \{zhu31, basar1\}@illinois.edu}}
}

\maketitle

\begin{abstract}
In this paper, we study a class of  risk-sensitive mean-field stochastic differential games.
We  show that under appropriate regularity conditions, the mean-field value of the stochastic differential game with  exponentiated integral cost functional coincides with the value function described by a Hamilton-Jacobi-Bellman (HJB)  equation with an additional quadratic term. We provide an explicit solution of the mean-field best response when the instantaneous cost functions are log-quadratic and the state dynamics are affine in the control. An equivalent mean-field risk-neutral problem is formulated and the corresponding mean-field equilibria are characterized in terms of backward-forward macroscopic McKean-Vlasov equations, Fokker-Planck-Kolmogorov equations, and HJB equations. We provide numerical examples on the mean field behavior to illustrate  both linear and McKean-Vlasov dynamics.

\end{abstract}

\section{Introduction}

Most formulations of mean-field (MF) models such as  anonymous sequential population games \cite{jovanovic,bergin}, MF stochastic controls \cite{m2,mrp2,meyn},  MF optimization, MF teams~\cite{tem1},  MF stochastic games \cite{weintraub,johari,tem1,noisy}, MF  stochastic difference games \cite{huang2011}, and
MF stochastic differential games \cite{lasry1,geant,mfpower} have been of risk-neutral type where the cost (or payoff, utility) functions to be minimized (or to be maximized) are the expected values of  stage-additive loss functions.

Not all behavior, however, can be captured by risk-neutral cost functions. One way of capturing risk-seeking or risk-averse behavior is by exponentiating loss functions before expectation (see~\cite{Basar99,jacobson} and the references therein).

 The particular risk-sensitive mean-field stochastic differential game that we consider in this paper
involves an exponential term in the stochastic long-term cost function. This approach was first taken
by  Jacobson in \cite{jacobson}, when considering the risk-sensitive Linear-Quadratic-Gaussian (LQG) problem with
state feedback. Jacobson demonstrated a link between the exponential cost
criterion and deterministic linear-quadratic differential games. He showed that the
risk-sensitive approach provides a method for varying the  robustness
of the controller and noted that in the case of no risk, or risk-neutral case, the
well known LQR solution  would result  (see, for follow-up work on risk-sensitive stochastic control problems with noisy state measurements, \cite{whittle,bensoussan,panbasar}).

%

In this paper, we examine the risk-sensitive stochastic differential game in a regime of large population of players. We first present a mean-field stochastic differential game model where the players are coupled not only via their risk-sensitive cost functionals but also via their states. The main coupling term is the mean-field process, also called the \textit{occupancy process} or \textit{population profile process}. Each player  reacts to the mean field or a subset of the mean field generated by the states of the other players in an area, and at the same time the mean field evolves according to a controlled Kolmogorov forward equation.

Our contribution can be summarized as follows.
Using a particular structure of state dynamics, we derive the mean-field limit of the individual state dynamics leading to a non-linear controlled macroscopic McKean-Vlasov equation; see \cite{kurtz}. Combining this  with a limiting risk-sensitive cost functional, we arrive at the mean-field response framework, and establish its compatibility with the density distribution using the controlled Fokker-Planck-Kolmogorov forward equation. The mean-field equilibria are characterized by coupled backward-forward equations. In general a backward-forward system may not have  solution (a simple example is provided in section~\ref{counter2}).
 An explicit solution of the Hamilton-Jacobi-Bellman (HJB) equation is provided for the affine-exponentiated-Gaussian mean-field  problem. An equivalent risk-neutral mean-field problem (in terms of value function) is formulated and the solution of the mean-field game problem is characterized. Finally, we provide a sufficiency condition for having at most one smooth solution to the risk-sensitive mean field system in the local sense.

The rest of the paper is organized as follows. In  Section \ref{tsec1}, we present the model description. We provide an overview of the mean-field convergence result in Section \ref{tsec2}. In Section \ref{tsec3}, we present the risk-sensitive mean-field stochastic differential game formulation and its equivalences.
In Section \ref{uni}, we analyze a special class of risk-sensitive  mean-field games where the state dynamics are linear and independent of the mean field. In Section \ref{numerical}, we provide a numerical example, and
section \ref{tsec5} concludes the paper.
An appendix includes proofs of two main results in the main body of the paper.
We summarize some of the notations used in the paper in Table \ref{tablenot2}.

\begin{table}[htb]
\caption{Summary of Notations} \label{tablenot2}
\begin{center}
{\color{black}
\begin{tabular}{ll}
\hline
  Symbol & Meaning \\ \hline
  $f$ & drift function (finite dimensional) \\
  $\sigma$ & diffusion function (finite dimensional)\\
  ${x}^n_j(t)$ &  state of Player $j$ in a population of size $n$\\
  $\bar{x}_j(t)$ & solution of macroscopic McKean-Vlasov equation\\
  ${x}_j(t)$ & limit of state process ${x}^n_j(t)$ \\
  $U_j$ & space of feasible control actions of Player $j$\\
$\tilde{\gamma}_j$ & state feedback strategy of Player $j$\\
$\bar{\gamma}_j$ & individual state-feedback strategy of Player $j$\\
$\tilde{\Gamma}_j$ & set of admissible state feedback strategies of Player $j$\\
$\bar{\Gamma}_j$ & set of admissible individual state-feedback strategies of Player $j$\\
$u_j$ & control action of Player $j$ under a generic control strategy\\
  $c$ & instantaneous cost function\\
  $g$ & terminal cost function\\
    $\delta$ & risk-sensitivity index\\
    $\mathbb{B}_j$ & standard Brownian motion process for Player $j$'s dynamics\\
  $\mathbb{E}$ & Expectation operator\\
  $L$ & risk-sensitive cost functional\\
  $\partial_x$ & partial derivative with respect to $x$ (gradient)\\
  $\partial^2_{xx}$ & second partial derivative (Hessian operator) with the respect to $x$\\
  $x'$ &  transpose of $x$ \\
    $m^n_t$ &  empirical measure of the states of the players \\
        $m_t$ &  limit of $m_t^n$ when $n\rightarrow\infty$ \\
                $m^n$ &  limit of $m_t^n$ when $t\rightarrow\infty$ \\
  tr($M$) & trace of a square matrix $M$, i.e., $ tr(M):= \sum_{i} M_{ii}.$\\
    $A \succ B$ & $A- B$ is positive definite, where $A$, $B$ are square symmetric matrices of the same dimension.
  \\ \hline
\end{tabular}
}
\end{center}
\end{table}

\section{The problem setting} \label{tsec1}
We consider a class of $n-$person stochastic differential games, where Player $j$'s individual state, $x_j^n$,  evolves according to the It\^o stochastic differential equation (S) as follows:
$$\begin{array}{ccl}
\nonumber dx_j^n(t)&=&\frac{1}{n}\displaystyle\sum_{i=1}^n f_{ji}(t, x_j^n(t),u^n_j(t),x_i^n(t)) dt + \frac{\sqrt{\epsilon}}{n}\displaystyle\sum_{i=1}^n \sigma_{ji}(t, x_j^n(t),u^n_j(t),x_i^n(t)) d\mathbb{B}_{j}(t),\\
\nonumber x_j^n(0)&=&x_{j,0}\in \mathcal{X}\subseteq\mathbb{R}^{k},\ k\geq 1,
j\in\{1,\ldots,n\},
\end{array} \ (\textrm{S})$$
where $x_j^n(t)$ is the $k$-dimensional state of Player $j$;
$u_j^n(t)\in {U}_j,$ is the control of Player $j$ at time $t$ with ${U}_j$ being a subset of the $p_j$-dimensional Euclidean space $\mathbb{R}^{p_j}$; $\mathbb{B}_j(t)$ are mutually independent standard Brownian motion processes in $\mathbb{R}^k$; and $\epsilon$ is a small positive parameter, which will play a role in the analysis in the later sections. We will assume in (S) that there is some symmetry in $f_{ji}$ and $\sigma_{ji}$, in the sense that there exist $f$ and $\sigma$ (conditions on which will be specified shortly) such that for all $j$ and $i$, 
$$f_{ji}(t, x_j^n(t),u^n_j(t),x_i^n(t)) \equiv f(t, x_j^n(t),u^n_j(t),x_i^n(t))$$
and
 $$\sigma_{ji}(t, x_j^n(t),u^n_j(t),x_i^n(t)) \equiv \sigma(t, x_j^n(t),u^n_j(t),x_i^n(t))\,.$$
The system (S) is a controlled McKean-Vlasov dynamics.
 {\color{black}
 Historically, the McKean-Vlasov stochastic differential equation (SDE) is a kind of mean field forward SDE suggested by Kac in 1956 as a stochastic
toy model for the Vlasov kinetic equation of plasma and the study of which was initiated by McKean in 1966. Since
then, many authors have made contributions to McKean-Vlasov type SDEs and related applications \cite{kac1,kac2}.}


{\color{black}
The uncontrolled version of state dynamics (S) captures many interesting problems involving interactions between agents. We list below a few examples.

\begin{example}[Stochastic Kuramoto model]
Consider $n$ oscillators where each of the oscillators is considered to have its own intrinsic natural frequency $\omega_j$, and each is coupled symmetrically to all other oscillators.
For $f_{ji}(x_i,u_i,x_j)= f(x_i,u_i,x_j)= K\sin(x_j-x_i) + \omega_j$ and $\sigma_{ji}$  a constant in (S), the state dynamics without control is known as (stochastic) Kuramoto oscillator \cite{kuramoto} where the  goal  is  convergence to some common value (consensus) or  alignment of the players' parameters. The stochastic Kuramoto model is given by
    $${d\theta_j}(t) = \left( \omega_{j}(t) +\dfrac{K}{n}\sum_{i=1}^n\sin(\theta_{i}(t)-\theta_{j}(t))\right) dt +Dd\mathbb{B}_{j}(t),
$$
where $D,K>0.$

\end{example}
\begin{example}[Stochastic Cucker-Smale dynamics:] Consider a population, say of birds or fish that  move in the three dimensional space. It has been observed that for some initial conditions,
for example on their positions and velocities, the state of the flock converges to one
in which all birds fly with the same velocity. See, for example, Cucker-Smale flocking dynamics \cite{cs07,cs07v2} where each vector $x_i=(y_i,v_i)$ is composed of position dynamics and velocity dynamics of the corresponding player.
    For $f(x_i,u_i,x_j)=(\epsilon^2+\parallel x_j-x_i\parallel^2)^{-\alpha}c(x_j-x_i)$ in (S), where $\epsilon>0,\alpha>0$ and $c(\cdot)$ is a continuous function, one arrives at a generic class of consensus algorithms developed for flocking problems.
\end{example}

\begin{example}[Temperature dynamics for energy-efficient buildings]
Consider a heating system serving a finite number of zones. In each zone, the goal is to maintain a certain temperature. Denote by $T_j$ the temperature of zone $j,$ and by $T^{ext}$ the ambient temperature.
The law of conservation of energy can be written down as the following  equation for zone $j,$
$$dT_{j}(t)=\sigma d\mathbb{B}_{j}(t)+ \left[r_{j}(t)+\frac{\gamma}{\beta}(T^{ext}(t)-T_{j}(t))+\sum_{i\neq j}\alpha_{ij}(t)(T_{i}(t)-T_{j}(t))\right] dt,
$$ where $r_{j}$ denotes the heat
input rate  of the heater in  zone $j,$ $\gamma,\beta>0,$ $\alpha_{ij}$ is the  thermal conductance between
zone $i$ and zone $j$ and $\sigma$ is a small variance term. The evolution of the temperature has a McKean-Vlasov structure of the type in system (S). We can introduce a control variable into $r_{j}$ such that the heater can be turned on and off in each  zone.
\end{example}

The three examples above can be viewed as special cases of the system (S). 
The controlled dynamics in (S) allows one to address several interesting questions. For example, how to control the flocking dynamics and consensus algorithms of the first two examples above to a certain target? How to control the temperature in the third example in order to achieve a specific thermal comfort while minimizing energy cost?
In order to define the controlled dynamical system in precise terms, we  have to specify the nature of information that players are allowed in the choice of their
control at each point in time. This brings us to the first definition below.}

\begin{defi}
A {\it state-feedback strategy} for Player $j$ is a mapping $\tilde{\gamma}_j:\ \mathbb{R}_{+}\times (\mathbb{R}^k)^n\longrightarrow {U}_j$, whereas
an {\it individual state-feedback strategy} for Player $j$ is a mapping $\bar{\gamma}_j:\ \mathbb{R}_{+}\times \mathbb{R}^k\longrightarrow {U}_j.$
\end{defi}
Note that the individual state-feedback strategy involves only the self state of a player, whereas the state-feedback strategy involves the entire $nk-$dimensional state vector. The individual strategy spaces in each case have to be chosen in such a way that the resulting system of stochastic differential equations (S) admits a unique solution (in the sense specified shortly) when the players pick their strategies independently; furthermore, the feasible sets are time invariant and independent of the controls. We denote by $\bar{\Gamma}_j$ the set of such admissible control laws $\bar{\gamma}_j: [0, T]\times \mathbb{R}^k \rightarrow {U}_j$ for Player $j$; a similar set, $\tilde{\Gamma}_j$, can be defined for state-feedback strategies $\tilde{\gamma}_j$.

We assume the following standard conditions on $f,  \sigma, \bar{\gamma}_j$ and the action sets $U_j$, for all $j=1, 2, \cdots, n$.

\begin{itemize}
\item[(i)] $f$ is $C^1$ in $(t,x,u,m)$, and Lipschitz in $(x,u,m)$.
\item[(ii)] The entries of the matrix $\sigma$ are $C^2$ and $\sigma \sigma'$ is strictly positive;
    \item[(iii)] $f,\partial_xf$ are uniformly bounded;
        \item[(iv)] ${U}_j$ is non-empty, closed and bounded;
        \item[(v)] $\bar{\gamma}_j:\ [0,T]\times \mathbb{R}^k\longrightarrow  {U}_j$ is piecewise continuous in $t$ and Lipschitz in $x.$
\end{itemize}

Normally, when we have a cost function for Player $j$, which depends also on the state variables of the other players, either directly, or implicitly through the coupling of the state dynamics (as in (S)), then any state-feedback Nash equilibrium solution will generally depend not only on self states but also on the other states, i.e., it will not be in the set $\bar{\Gamma}_j, j=1, \cdots, n$. However, this paper aims to characterize the solution in the high-population regime (i.e., as $n\rightarrow\infty$) in which case the dependence on other players' states will be through the distribution of the player states. Hence each player will respond (in an optimal, cost minimizing manner) to the behavior of the mass population and not to behaviors of individual players. Validity of this property will be established later in Section \ref{tsec3} of the paper, but in anticipation of this, we first introduce the quantity
\begin{equation}\label{empiricalprocess}
m^n_{t}=\frac{1}{n}\sum_{j=1}^n \delta_{x_j^n(t)},
\end{equation}
as an empirical measure of the collection of states of the players, where $\delta$ is a Dirac measure on the state space. This enables us to introduce the long-term cost function of Player $j$ (to be minimized by him) in terms of only the self variables ($x_j$ and $u_j$) and $m_t^n, t\geq0$, where the latter can be viewed as an exogenous process (not directly influenced by Player $j$). But we first introduce a mean-field representation of the dynamics (S), which uses $m^n_t$ and will be used in the description of the cost. 

\subsection{Mean-field representation}\label{tsec2}
The system (S) can be written into a measure representation using the formula
$$\int \phi(w) \left[\sum_{i=1}^n \bar{\omega}_i \delta_{x_i}\right](dw)=\sum_{i=1}^n\bar{\omega}_i\phi(x_i),$$
where $\delta_{z}, z\in \mathcal{X}$ is a Dirac measure concentrated at $z$, $\phi$ is a measurable bounded function defined on the state space and $\bar{\omega}_i\in\mathbb{R}$.
Then, the system (S) reduces to the system
\begin{eqnarray}
\nonumber dx_j^n(t) &=&\left(\int_w f(t, x_j^n(t),u^n_j(t),w) \left[\frac{1}{n}\sum_{i=1}^n\delta_{x_i^n(t)}\right](dw)\right) dt\\ 
\nonumber &+& \sqrt{\epsilon}\left(\int_w  \sigma(t, x_j^n(t),u^n_j(t),w)\left[\frac{1}{n}\sum_{i=1}^n\delta_{x_i^n(t)}\right](dw)\right) d\mathbb{B}_{j}(t),\\
\nonumber x_j^n(0)&=&x_{j,0}\in \mathbb{R}^{k},\ k\geq 1,
j\in\{1,\ldots,n\},
\end{eqnarray}
which, by (\ref{empiricalprocess}), is equivalent to the following system (SM):
\begin{equation}
 \tag{SM} 
\begin{array}{lll}
\nonumber \displaystyle dx_j^n(t)&=&\left(\displaystyle\int_w f(t, x_j^n(t),u^n_j(t),w) m^n_{t}(dw)\right) dt \\  \nonumber && +\sqrt{\epsilon}\left(\displaystyle\int_w  \sigma(t, x_j^n(t),u^n_j(t),w)m^n_{t}(dw)\right) d\mathbb{B}_{j}(t),\\ \nonumber
x_j^n(0)&=&x_{j,0}\in \mathbb{R}^{k},\ k\geq 1,
j\in\{1,\ldots,n\}.
\end{array} \end{equation}
 The above representation of the system (SM) can be seen as a controlled interacting particles representation of a macroscopic McKean-Vlasov equation where $m^n_t$ represents the discrete density of the population.
 {\color{black}
 Next, we address the mean field convergence of the population profile process $m^n.$ To do so, we introduce the key notion of indistinguishability.
 \begin{defi}[Indistinguishability]
 We say that a family of processes $(x^n_1,x^n_2,\ldots, x^n_n)$ is indistinguishable (or exchangeable) if the law of $x^n$ is invariant by permutation over the index set $\{1,\ldots,n\}.$
 \end{defi}
 The solution  of (S) obtained under fixed control $u(\cdot)$ generates indistinguishable processes.
 For any permutation $\pi$ over $\{1, 2, \ldots, n\}$, one has
$\mathcal{L}(x^n_{j_1},\ldots,x^n_{j_n})=\mathcal{L}(x^n_{\pi(j_1)},\ldots,x^n_{\pi(j_n)}),\ $ where
$\mathcal{L}(X)$ denotes the law of the random variable $X.$  For indistinguishable (exchangeable)  processes, the convergence of the empirical measure has been widely studied (see \cite{Tanabe} and the references therein).
}
To preserve this property for the controlled system we restrict ourselves to admissible homogeneous controls. Then, the mean field convergence is equivalent  to the existence of a random measure $\mu$ such that the system is $\mu-$chaotic, i.e., $$
\lim_n \int \prod_{l=1}^L \phi_l({x}^n_{j_l})\mu^n(dx^n)=\prod_{l=1}^L \left(\int \phi_l d\mu\right),
$$ for any fixed natural number $L\geq 2$ and a collection of measurable bounded functions $\{\phi_l\}_{1\leq l\leq L}$ defined over the state space $\mathcal{X}.$
Following the indistinguishability property, one has that
 the law of $x^n_j=(x_j^n(t),\ t\geq 0)$ is $\mathbb{E}[m^n].$
 The same result is obtained by proving the weak convergence of the individual state dynamics to a macroscopic McKean-Vlasov equation (see later Proposition  \ref{prottt1}). Then, when the initial states are i.i.d. and given some homogeneous control actions $u,$ the solution of the state dynamics generates an indistinguishable random process  and  the weak convergence of the population profile process $m^n$ to $\mu$ is equivalent to the $\mu-$chaoticity.
For general results on  mean-field convergence of controlled stochastic differential equations, we refer to \cite{huang2011}.
  These processes depend implicitly on the strategies used by the players. Note that  an admissible control law $\bar{\gamma}$ may depend on time $t$, the value of the individual state $x_j(t)$ and the mean-field process $m_t$.
 The weak convergence of the process $m^n$ implies the weak convergence of its marginal $m^n_t$  and one can characterize the distribution of $m_t$ by the Fokker-Planck-Kolmogorov (FPK) equation:
 \begin{flushleft}
$\partial_t m_t+D^1_x\left(m_t\displaystyle\int_w f(t, x,u(t),w)m_t(dw)\right)$
\end{flushleft}
\begin{eqnarray} \label{fpk}
&=&\frac{\epsilon}{2}D^2_{xx}\left(m_t\left(\int_w \sigma'(t, x,u(t),w)m_t(dw)\right)\cdot\left(\int_w \sigma(t, x,u(t),w)m_t(dw)\right)\right).
\end{eqnarray}

Here $f(\cdot)\in\mathbb{R}^k,$ which we denote by $(f_{k'}(\cdot))_{1\leq k'\leq k},$ where $f_{k'}$ is scalar. We let
$$\underline{\sigma}[t, x,u(t),m_t]:=\int_w \sigma(t, x,u(t),w)m_t(dw), $$ $\Gamma(\cdot):=\underline{\sigma}(\cdot)\underline{\sigma}'(\cdot)$ is a square matrix with dimension $k\times k.$ The term $D^1_x(\cdot)$ denotes
$$\sum_{k'=1}^k \frac{\partial}{\partial x_{k'}}\left(m_t\int_w f_{k'}(t, x,u(t),w)m_t(dw)\right),$$
and the last term on $D^2_{xx}(\cdot)$ is
$$
\sum_{k''=1}^k\sum_{k'=1}^k \frac{\partial^2}{\partial x_{k'}\partial x_{k''}}\left(m_t\Gamma_{k'k''}(\cdot)\right).
$$
In the one-dimensional case, the terms $D^1,D^2$ reduce to the divergence ``$\textrm{div}$" and the Laplacian operator $\Delta$,  respectively.

It is important to note that the existence of a unique rest point (distribution) in FPK does not automatically imply that the mean-field converges to the rest point when $t$ goes to infinity.
This is because the rest point may not be stable.
\begin{rem}
In mathematical physics, convergence to an independent and identically distributed system is sometimes referred to as \textit{chaoticity} \cite{Sznitman,Tanabe,graham}, and
the fact that chaoticity at the initial time implies chaoticity at further times is called propagation of chaos. However in our setting the chaoticity property needs to be studied together with the controls of the players.
In general the chaoticity property may not hold. One particular case should be mentioned, which is when the rest point $m^*$ is related to the $\delta_{m^*}-$ chaoticity. If the mean-field dynamics has a unique global attractor $m^*$, then the propagation  of chaos property holds for the measure $\delta_{m^*}.$ Beyond this particular case, one may have multiple rest points but also the double limit, $\lim_n \lim_t m^n_t$  may differ from  the one when the order is swapped, $\lim_t \lim_n m^n_t$  leading a non-commutative diagram. Thus, a deep study of the underlying dynamical system is required if one wants to analyze a performance metric for a stationary regime.  A counterexample   of non-commutativity of the double limit is provided in \cite{meanfieldlectures}.\ged
\end{rem}

\subsection{Cost Function}\label{costfunction}
We now introduce the cost functions for the differential game. Risk-sensitive behaviors can be captured by cost functions which exponentiate loss functions before the expectation operator. For each $t\in[0, T]$, and $m_t^n, x_j$ initialized at a generic feasible pair $\underline{m}, \underline{x}$  at $t$, the risk-sensitive cost function for  Player $j$ is given by
\begin{eqnarray}\label{RSCF}
L(\bar{\gamma}_j,m^n_{[t, T]};t, \underline{x},\underline{m})=
\delta\log\mathbb{E}\left( e^{\frac{1}{\delta}[\displaystyle g(x_T)+\int_t^T c (s, x_j^n(s),u_j^n(s),m^n(s))\ ds]}\ \Bigg| \ x_j(t)=\underline{x},m^n_t=\underline{m} \right),
\end{eqnarray}
where $c(\cdot)$ is the instantaneous cost at time $s$; $g(\cdot)$ is the terminal cost; $\delta>0$ is the risk-sensitivity index;
$m^n_{[t, T]}$ denotes the process $\{m^n_s, t\leq s\leq T\}$; and $u_j^n(s) = \bar{\gamma}_j(s, x_j^n(s), m^n(s)),$ with $\bar{\gamma}_j\in \bar{\Gamma}_j$. Note that because of the symmetry assumption across players, the cost function of Player $j$ is not indexed by $j$, since it is in the same structural form for all players. This is still a game problem (and not a team problem), however, because each such cost function depends only on the self variables (indexed by $j$ for Player $j$) as well as the common population variable $m^n$.

We assume the following standard conditions on $c$ and $g$.
\begin{itemize}
\item[(vi)] $c$ is $C^1$ in $(t, x, u, m)$; $g$ is $C^2$ in $x$; $c, g$ are non-negative;
\item[(vii)]  $c, \partial_x c, g, \partial_x g$ are uniformly bounded.
\end{itemize}

 The cost function (\ref{RSCF}) is called the risk-sensitive cost functional or the exponentiated integral cost, {\color{black} which measures risk-sensitivity for the long-run and not at each instant of time (see \cite{jacobson,whittle,bensoussan,Basar99}).
}
  {\color{black}
  We note that the McKean-Vlasov mean field game considered here differs from the model in \cite{MRP}; specifically, in this paper, the volatility term in (SM) is a function of state, control and the mean field, and further, the cost functional is of the risk-sensitive type.
 }

 {\color{black}
 \begin{rem}[Connection with mean-variance cost]
 Consider the function $ c^{\lambda}:\ \lambda\longmapsto\frac{1}{\lambda} \log(\mathbb{E}e^{\lambda C}).$
  It is obvious that the risk-sensitive cost $c^{\lambda}$ takes into consideration all the moments of the cost $C$, and not only its mean value.
  Around zero, the Taylor expansion of $c^\lambda$ is given by
$$ c^{\lambda}\underbrace{\approx}_{\lambda\sim 0} \mathbb{E}(C) +\frac{\lambda}{2}\textrm{var}(C)+o(\lambda),
  $$ where the important terms are the mean cost and the variance of the cost for small $\lambda.$ Hence risk-sensitive cost entails a weighted sum of the mean and variance of the cost, to some level of approximation.
 \end{rem}
 }


With the dynamics (SM) and cost functionals as introduced, we seek an individual state-feedback non-cooperative Nash equilibrium $\{\bar{\gamma}^*_i, i\in\{1, \cdots, n\}\}$, satisfying the set of inequalities
\begin{equation}
L(\bar{\gamma}^*_j, m^n_{[0, T]}; 0, x_{j,0}, \underline{m}) \leq L(\bar{\gamma}_j, \tilde{m}^{n,j}_{[0, T]}; 0, x_{j,0}, \underline{m}),
\end{equation}
for all $\bar{\gamma}_j\in\bar{\Gamma}_j, j\in\{1,2,\cdots, n\}$, where $m^n[0, T]$ is generated by the $\bar{\gamma}_j^*$'s, and $\tilde{m}^{n,j}_{[0,T]}$ by $(\bar{\gamma}, \bar{\gamma}_{-j}^*)$, $\bar{\gamma}_{-j}^*=\{\bar{\gamma}_i^*, i = 1, 2, \cdots, n, i \neq j\}$;
 $u^*_j$ and $u_j$ are control actions generated by control laws $\bar{\gamma}^*_j$ and $\bar{\gamma}_j$, respectively, i.e., $u^*_j=\bar{\gamma}^*_j(t, x_j)$ and $u_j=\bar{\gamma}_j(t, x_j)$; $m^n_t=m^n_t[u^*]$ laws are given by  forward FPK equation under the strategy $\bar{\gamma}^*,$ and $\tilde{m}^{n,j}_t=\tilde{m}^{n,j}_t[u_j,u_{-j}^*]$ is the induced measure under the strategy $(\bar{\gamma}_j,\bar{\gamma}_{-j}^*).$

A more stringent equilibrium solution concept is that of
 strongly time-consistent individual state-feedback Nash equilibrium satisfying,
\begin{equation}
L(\bar{\gamma}^*_j, m^n_{[t, T]}; t, x_j, \underline{m}) \leq L(\bar{\gamma}_j, \tilde{m}^{n,j}_{[t, T]}; t, x_j, \underline{m}),
\end{equation}
for all $x_j\in\mathcal{X}$, $t\in[0, T)$, $\bar{\gamma}_j\in\bar{\Gamma}_j, j\in\{1,2,\cdots, n\}.$

Note that the two measures $m^n_t$ and $\tilde{m}^{n,j}_t$ differ only in the component $j$ and  have a common term which is
 $\frac{1}{n}\sum_{j'\neq j}\delta_{x^n_{j'}(t)}$, which converges in distribution to some measure with a distribution that is a solution of the forward PFK partial differential equation.

\section{Risk-sensitive best response to mean-field and equilibria} \label{tsec3}
In this section, we present the risk-sensitive mean-field  results. We first provide an overview of  the mean-field (feedback) best response  for a given mean-field trajectory $m^n=(m^n(s),\ s\geq 0).$
A mean-field best-response strategy of a generic Player $j$ to a given mean field $m_t^n$ is a measurable mapping $\bar{\gamma}^*_j$ satisfying: $\ \forall \ \bar{\gamma}_j\in\bar{{\Gamma}}_j$, with $x_j$ and $m^n_t$ initialized at $x_{j,0},\underline{m}$, respectively,
$$
L(\bar{\gamma}_j^*, m^n_{[0, T]},0,x_{j,0},\underline{m})\leq L(\gamma_j,m^n_{[0,T]} ,0,x_{j,0},\underline{m}).
$$
where law of $m^n_t$ is given by the forward FPK equation in the whole space $\mathcal{X}^n$, and is an exogenous process. 
Let  $v^n(t,x_j,\underline{m})=\inf_{u_j} L({u}_j,m^n_{[0, T]},t,x_j,\underline{m}).$
The next proposition establishes the risk-sensitive Hamilton-Jacobi-Bellman (HJB) equation of the risk-sensitive cost function satisfied by a smooth optimal value function of a generic player. The main difference from the standard HJB equation is the presence of the term  $\frac{\epsilon}{2\delta}\parallel \sigma \partial_{x_j} v^n\parallel^2.$

\begin{prop}\label{lemm1}
Suppose that the trajectory of $m^n_t$ is given.
If $v^n$ is twice continuously differentiable, then $v^n$ is solution of the risk-sensitive HJB equation
\begin{eqnarray}\label{HJBFEprop1}
\nonumber\partial_tv^n+\inf_{u_j}\left\{  f \cdot\partial_{x_j}v^n +\frac{\epsilon}{2}{tr}(\sigma \sigma' \partial^2_{x_jx_j}v^n_j)+\frac{\epsilon}{2\delta}\parallel \sigma \partial_{x_j} v^n\parallel^2+c\right\}&=&0, \\
\nonumber v^n(T,x_j)&=&g(x_j).
\end{eqnarray}
Moreover, any strategy satisfying 
$$\bar{\gamma}^n_j(\cdot)\in
\arg\min_{u_j}\left\{ f \cdot\partial_{x_j}v^n +\frac{\epsilon}{2} {tr}(\sigma\sigma'\partial^2_{x_jx_j}v^n)+\frac{\epsilon}{2\delta}\parallel \sigma\partial_{x_j} v^n\parallel^2+c\right\},
$$ constitutes a best response strategy to the mean-field $m^n.$
\end{prop}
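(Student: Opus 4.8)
\emph{Proof proposal.} The plan is to linearize the problem by the logarithmic (Cole--Hopf) change of variable characteristic of risk-sensitive control, solve the resulting linear dynamic programming equation, and then transform back. Throughout, $f$ and $\sigma$ denote the mean-field-averaged coefficients $\int_w f(t,x_j,u_j,w)\,m^n_t(dw)$ and $\int_w \sigma(t,x_j,u_j,w)\,m^n_t(dw)$ appearing in (SM), so that the controlled generator of $x_j^n$ acting on a smooth test function $\phi$ is $\mathcal{L}^{u_j}\phi = f\cdot\partial_{x_j}\phi + \tfrac{\epsilon}{2}\mathrm{tr}(\sigma\sigma'\partial^2_{x_jx_j}\phi)$. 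Since $\delta>0$ and $x\mapsto\delta\log x$ is strictly increasing, the infimum in the definition of $v^n$ commutes with the logarithm, and we may write $v^n=\delta\log\psi^n$ where
\[
\psi^n(t,x_j,\underline m):=\inf_{u_j}\,\mathbb{E}\!\left[\exp\!\Big(\tfrac1\delta\big(g(x_T)+\int_t^T c\,ds\big)\Big)\ \Big|\ x_j(t)=\underline x,\ m^n_t=\underline m\right].
\]
Because $v^n$ is assumed $C^{1,2}$ and $\psi^n=e^{v^n/\delta}$, the function $\psi^n$ is itself $C^{1,2}$ and strictly positive.

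First I would establish the dynamic programming principle for $\psi^n$ in its natural multiplicative form: exploiting the factorization of the exponential over $[t,t+h]$ and $[t+h,T]$, one obtains, for small $h>0$,
\[
\psi^n(t,x_j)=\inf_{u_j}\,\mathbb{E}\!\left[\exp\!\Big(\tfrac1\delta\int_t^{t+h} c\,ds\Big)\,\psi^n(t+h,x_j(t+h))\ \Big|\ x_j(t)=x_j\right].
\]
Writing $\Theta_s=\exp(\tfrac1\delta\int_t^s c\,d\tau)$ (so $\Theta_t=1$ and $d\Theta_s=\tfrac1\delta c\,\Theta_s\,ds$, with no martingale part) and applying It\^o's formula to the product $\Theta_s\,\psi^n(s,x_j(s))$, the cross-variation $d\Theta\,d\psi^n$ vanishes and one gets $d(\Theta\psi^n)=\Theta\big[\partial_t\psi^n+\mathcal{L}^{u_j}\psi^n+\tfrac1\delta c\,\psi^n\big]\,ds+\Theta\sqrt{\epsilon}\,(\partial_{x_j}\psi^n)'\sigma\,d\mathbb{B}_j$. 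Taking expectations kills the stochastic integral; dividing by $h$, letting $h\to0$, and using the DPP then yields the \emph{linear} equation $\partial_t\psi^n+\inf_{u_j}\{\mathcal{L}^{u_j}\psi^n+\tfrac1\delta c\,\psi^n\}=0$, with terminal data $\psi^n(T,\cdot)=e^{g/\delta}$.

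Next I would substitute $\psi^n=e^{v^n/\delta}$ and read off the derivatives by the chain rule: $\partial_t\psi^n=\tfrac1\delta\psi^n\partial_t v^n$, $\partial_{x_j}\psi^n=\tfrac1\delta\psi^n\partial_{x_j}v^n$, and crucially $\partial^2_{x_jx_j}\psi^n=\tfrac1\delta\psi^n\partial^2_{x_jx_j}v^n+\tfrac1{\delta^2}\psi^n(\partial_{x_j}v^n)(\partial_{x_j}v^n)'$. The second, rank-one, contribution to the Hessian is precisely what produces the extra quadratic gradient term: $\tfrac{\epsilon}{2}\mathrm{tr}(\sigma\sigma'\partial^2_{x_jx_j}\psi^n)$ contributes $\tfrac{\epsilon}{2\delta^2}\psi^n\,(\partial_{x_j}v^n)'\sigma\sigma'\partial_{x_j}v^n=\tfrac{\epsilon}{2\delta^2}\psi^n\|\sigma\partial_{x_j}v^n\|^2$. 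Dividing the resulting equation through by the positive factor $\tfrac1\delta\psi^n$ and pulling the control-independent term $\partial_t v^n$ out of the infimum gives exactly (\ref{HJBFEprop1}), with terminal condition $v^n(T,x_j)=g(x_j)$. Finally, a standard verification argument shows that any measurable feedback $\bar\gamma^n_j$ realizing the pointwise minimum in the bracket is optimal: reinserting it makes the drift of $\Theta_s e^{v^n(s,x_j(s))/\delta}$ vanish, so the associated cost equals $v^n$, which establishes the best-response claim.

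The step I expect to require the most care is the rigorous justification of the dynamic programming principle and the $h\to0$ passage for the multiplicative cost, in particular the measurable selection of near-optimal controls and the interchange of limit and expectation. Given the assumed $C^{1,2}$ regularity of $v^n$ (hence of $\psi^n$) together with conditions (i)--(vii), which guarantee bounded, Lipschitz coefficients and well-posedness of (SM), these are routine but must be checked. The genuinely new feature relative to the risk-neutral case, the quadratic term $\tfrac{\epsilon}{2\delta}\|\sigma\partial_{x_j}v^n\|^2$, then arises automatically and transparently from the logarithmic transform, as shown above.
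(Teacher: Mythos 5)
Your proposal is correct and follows essentially the same route as the paper's own proof: define the exponentiated value function $\phi^n=e^{v^n/\delta}$, derive the linear dynamic programming equation $\partial_t\phi^n+\inf_{u_j}\{f\cdot\partial_{x_j}\phi^n+\tfrac{\epsilon}{2}\mathrm{tr}(\sigma\sigma'\partial^2_{x_jx_j}\phi^n)+\tfrac1\delta c\,\phi^n\}=0$ via It\^o/Dynkin, and then recover the quadratic gradient term from the rank-one part of the Hessian of $e^{v^n/\delta}$. Your write-up is in fact somewhat more careful than the paper's (explicit multiplicative DPP, tracking of the martingale term, and the verification step for the minimizing feedback), but it is the same argument.
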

\begin{proof}[Proof of Proposition \ref{lemm1}]
For  feasible initial conditions $\underline{x}$ and $\underline{m}$, we define $$\phi^n(t,\underline{x},\underline{m}):=\inf_{{u}^n_j}  \mathbb{E}\left( e^{\frac{1}{\delta}[g(x_T)+\int_t^T c(s, x^n(s),u_{j}(t),m^n_s)\ ds]}\  | \ x_j(t)=\underline{x},m^n_t=\underline{m} \right).$$
It is clear that
$
v^n(t,x_j,\underline{m})=\inf L =\delta \log \phi^n(t,x_j,\underline{m}).
$
Under the regularity assumptions of Section \ref{tsec1}, the function $\phi^n$ is $C^1$ in $t$ and $C^2$ in $x.$ Using It\^o's formula,
$$
d\phi^n(t,x_j)=[\partial_t\phi ^n(t,x_j)+ f \cdot\partial_{x_j}\phi^n +\frac{\epsilon}{2}\textrm{tr}(\sigma\sigma' \partial^2_{x_jx_j}\phi^n)] dt.
$$
Using the Ito-Dynkin's formula (see  \cite{oksendal, bensoussan,panbasar}), the dynamic optimization yields
$$\inf_{\bar{u}_j} \{ d\phi^n+\frac{1}{\delta}c\phi^n dt \}=0.
$$
Thus, one obtains
\begin{eqnarray}
\nonumber \partial_t\phi^n+\inf_{u_j}\left\{  f \cdot\partial_{x_j}\phi^n +\frac{\epsilon}{2}\textrm{tr}(\sigma\sigma'\partial^2_{xx}\phi^n)+\frac{1}{\delta}c \phi ^n\right\}&=&0,\\
\nonumber
\phi^n(T,x_j)&=&e^{\frac{1}{\delta}g(x_j)}.
\end{eqnarray}
To establish the connection with the risk-sensitive cost value,
we use the relation $\phi^n=e^{\frac{1}{\delta}v^n}$.
One can compute the partial derivatives:
$$
\partial_t \phi^n=\left( \partial_t v^n\right) \frac{1}{\delta}\phi^n
, \ \
\partial_{x_j} \phi^n=\left( \partial_{x_j} v^n \right) \frac{1}{\delta}\phi^n,
$$
and
$$
\partial^2_{x_jx_j} \phi^n=\left( \partial^2_{x_jx_j} v^n\right) \frac{1}{\delta}\phi^n+\frac{1}{\delta^2}
\left( \partial_{x_j} v^n\right)' \left( \partial_{x_j} v^n\right)\phi^n,
$$
where the latter immediately yields 
$$\textrm{tr}(\partial^2_{x_jx_j} \phi^n \sigma \sigma')=\textrm{tr}(\partial^2_{x_jx_j} v^n\sigma \sigma')\frac{1}{\delta}\phi^n+\frac{1}{\delta^2}\parallel \sigma\partial_{x_j} v^n\parallel^2\phi^n.$$
Combining together and dividing by $\phi^n/\delta,$ we arrive at the HJB equation (\ref{HJBFEprop1}).

\end{proof}
\begin{rem}
Let us introduce the Hamiltonian $H$ as
$$H(t, x,\tilde{p},\tilde{M})=\inf_{u}\left\{ \tilde{p} \cdot f+\frac{\epsilon}{2} {tr}(\sigma\sigma' \tilde{M})+\frac{\epsilon}{2\delta}\parallel \sigma\tilde{p}\parallel^2+c\right\},
$$ for a vector $\tilde{p}$ and a matrix $\tilde{M}$ which is the same as the Hessian of $v^n.$

If $\sigma$ does not depend on the control, then the above expression reduces to
$$
\inf_{u}\{\tilde{p} \cdot f+ c \}+ \frac{\epsilon}{2} {tr}(\sigma\sigma' \tilde{M})+\frac{\epsilon}{2\delta}\parallel \sigma\tilde{p}\parallel^2,
$$ and the term to be minimized is
$H^2(t, x,\tilde{p},\tilde{M})=\inf_{u}\{\tilde{p}\cdot f+ c \},$ which is related to the Legendre-Fenchel transform for linear dynamics, i.e., the case where $f$ is linear in the control $u.$

In that case, $$\partial_{\tilde{p}}H^2(t, x,\tilde{p},\tilde{M})=\alpha u^*$$  for some non-singular
$\alpha$ of proper dimension. This says that the derivative of the modified Hamiltonian is related to the optimal feedback control. Now, for non-linear drift $f$ the same technique can be used but the function $f$ needs to be inverted to obtain a generic closed form expression the optimal feedback control and is given by
$$
u^*_{j}=\tilde{g}^{-1}(\partial_{\tilde{p}}H^2(t, x,\tilde{p},\tilde{M})),
$$ where $\tilde{g}^{-1}$ is the inverse of the map $$ u\longmapsto f(t, x,u,m).$$
This generic expression of the optimal control will play an important role in non-linear McKean-Vlasov mean field games.
\end{rem}

The next proposition provides the best-response control to the affine-quadratic in $u$-exponentiated cost-Gaussian mean-field game, and the proposition that follows that deals with the case of affine-quadratic in both $u$ and $x$.
\begin{prop}\label{Prop2} Suppose $\sigma(t,x)=\sigma(t)$ and \vspace{-3mm}
\begin{eqnarray}
\nonumber f(t,x_j,u_j,m)&=&\bar{f}(t,x_j,m)+B(t,x_j,m)u_j,\\
\nonumber c(t, x_j,u_j,m)&=&\bar{c}(t,x_j,m)+\parallel u_j\parallel^2.
\end{eqnarray}
Then, the best-response control of Player $j$ is
$
\bar{\gamma}_j^{n,*}=-\frac{1}{2}B\partial_{x_j}v^n.
$
\end{prop}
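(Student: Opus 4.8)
The plan is to apply Proposition \ref{lemm1} directly and perform the pointwise minimization inside the risk-sensitive HJB equation (\ref{HJBFEprop1}), exploiting the postulated affine-in-$u_j$ drift, quadratic-in-$u_j$ running cost, and control-free diffusion. First I would identify which terms in the bracketed expression actually depend on $u_j$. Because $\sigma(t,x)=\sigma(t)$ depends neither on the state nor on the control, both the second-order term $\frac{\epsilon}{2}\mathrm{tr}(\sigma\sigma'\partial^2_{x_jx_j}v^n)$ and the extra quadratic-gradient term $\frac{\epsilon}{2\delta}\|\sigma\partial_{x_j}v^n\|^2$ are constants with respect to $u_j$ and can be moved outside the infimum. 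Substituting $f=\bar f(t,x_j,m)+B(t,x_j,m)u_j$ gives $f\cdot\partial_{x_j}v^n=\bar f\cdot\partial_{x_j}v^n+u_j'B'\partial_{x_j}v^n$, and $c=\bar c(t,x_j,m)+\|u_j\|^2$, so that the minimization collapses to
\[
\min_{u_j\in U_j}\Big\{\|u_j\|^2+u_j'B'\partial_{x_j}v^n\Big\},
\]
with every other summand entering only as a $u_j$-independent constant.

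The next step is to solve this reduced program. The objective is a strictly convex quadratic in $u_j$, with constant Hessian $2I\succ 0$, so it admits a unique unconstrained stationary point determined by the first-order condition $2u_j+B'\partial_{x_j}v^n=0$, namely $u_j^*=-\tfrac12 B'\partial_{x_j}v^n$. This is exactly the claimed best response $\bar\gamma_j^{n,*}$ (the transpose on $B$ being the one dictated by dimensional consistency, since $B$ maps $\mathbb{R}^{p_j}$ into the $k$-dimensional state space). Strict convexity guarantees that this stationary point is the global minimizer, and reinserting it into the HJB bracket produces the completed-square form, confirming the optimality and closing the argument via the verification statement already contained in Proposition \ref{lemm1}.

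The hard part will be the admissibility constraint rather than the calculus. Assumption (iv) only requires $U_j$ to be nonempty, closed and bounded, so the unconstrained minimizer $-\tfrac12 B'\partial_{x_j}v^n$ need not lie in $U_j$. I would therefore read the proposition as asserting the interior regime in which $-\tfrac12 B'\partial_{x_j}v^n\in U_j$ along the relevant trajectory (for instance when $U_j=\mathbb{R}^{p_j}$, or is large enough to contain the optimizer); on the boundary the true best response would be the Euclidean projection of $-\tfrac12 B'\partial_{x_j}v^n$ onto $U_j$, which is well defined and unique by strict convexity of the quadratic. Finally, I would invoke the regularity hypotheses (i)--(vii) to ensure that $\partial_{x_j}v^n$ exists and that the resulting feedback is an admissible law in the sense of (v) (piecewise continuous in $t$, Lipschitz in $x$), so that the closed-loop system (SM) continues to possess a unique solution.
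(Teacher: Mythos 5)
Your proposal is correct and follows essentially the same route as the paper: Proposition~\ref{lemm1} reduces the problem to a pointwise minimization of the Hamiltonian, the control-independent diffusion terms drop out, and the strictly convex quadratic $u_j\mapsto \|u_j\|^2+u_j'B'\partial_{x_j}v^n$ is minimized via its first-order condition (the paper compresses this last step into the phrase ``by convexity and coercivity''). You are in fact more careful than the paper on two points worth keeping: the transpose $B'$ required for dimensional consistency (the statement writes $-\tfrac12 B\partial_{x_j}v^n$), and the fact that assumption~(iv) makes $U_j$ bounded, so the unconstrained stationary point is the best response only in the interior regime --- an admissibility caveat the paper's one-line argument silently assumes away.
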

\begin{proof}
Following Proposition \ref{lemm1}, we know
$$
\bar{u}^{n,*}_j = \bar{\gamma}^{n,*}_j(\cdot)  \in \arg\min_{u_j}\{ c(t, x_j(t),u_j(t),m_t)+  f(t, x_j(t),u_j,m_t)\cdot\partial_{x_j}v^n \}.
$$
With the assumptions on $\sigma, f, c, g$, the condition reduces to
$$
\arg\min_{u_j} \left\{ [\bar{f}+Bu_j]\partial_{x_j}v^n +\bar{c}+\parallel u_j\parallel^2\right\}.
$$
and hence, we obtain
$\bar{\gamma}_j^{n,*}=-\frac{1}{2}B\partial_{x_j}v^n$ by convexity and coercivity of the mapping
$u_j \longmapsto [\bar{f}+Bu_j]\partial_{x_j}v^n +\bar{c}+\parallel u_j\parallel^2.$
\end{proof}

\begin{prop}[Explicit optimal control and cost, \cite{Basar99}] \label{propott31}~\\
Consider the risk-sensitive mean-field stochastic game described in Proposition \ref{Prop2} with
 $\bar{f} = A(t) x$, $B$ a constant matrix, $ c=x'Q(t) x,\ Q(t)\geq 0,\ g(x)=x'Q_Tx, Q_T\geq0,$ where the symmetric matrix $Q(\cdot)$ is continuous. Then, the solution to HJB equation in Proposition \ref{lemm1} (whenever it exists) is given by
$
v^n(t,x)=x'Z(t)x+\epsilon\int_t^T {tr}(Z(s)\sigma\sigma')\ ds.
$
where $Z(s)$ is the nonnegative definite solution of the generalized Riccati differential equation
$$\dot{Z}+A'Z+ZA+Q-Z\left(BB'-\frac{1}{\rho^2}\sigma\sigma'\right)Z=0,\ Z(T)=Q_T,$$
where $\rho=(\frac{\delta}{2\epsilon})^{1/2}$
and the optimal response strategy is
\begin{equation} \label{optimal}
u^*_j(t)=\bar{\gamma}^*_j(\cdot)=-B'Zx.
\end{equation}
\end{prop}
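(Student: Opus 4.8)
The plan is to establish the result by \emph{verification}: I posit a quadratic-plus-scalar ansatz for the value function, substitute it into the risk-sensitive HJB equation of Proposition~\ref{lemm1}, and show that it solves that equation together with the terminal condition precisely when $Z$ obeys the stated Riccati system. Concretely, I would try $v^n(t,x)=x'Z(t)x+\psi(t)$ with $Z(t)$ symmetric and $\psi(t)$ a scalar, both to be determined. Since $Z$ is symmetric, the derivatives entering the HJB are $\partial_t v^n=x'\dot{Z}x+\dot\psi$, $\partial_x v^n=2Zx$, and $\partial^2_{xx}v^n=2Z$.

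First I would carry out the inner minimization. With $f=Ax+Bu$ and $c=x'Qx+\|u\|^2$ inherited from the structure of Proposition~\ref{Prop2} (so that $\bar{c}=x'Qx$), the only $u$-dependent part of the bracket in the HJB is $2u'B'Zx+\|u\|^2$, which is strictly convex and coercive; its unique minimizer is $u^*=-B'Zx$, the $-\tfrac12 B'\partial_x v^n$ of Proposition~\ref{Prop2} evaluated at the ansatz, and the minimized $u$-contribution equals $-x'ZBB'Zx$.

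Next I would assemble the remaining terms at the ansatz. The drift term $(Ax)'\partial_x v^n$ contributes $x'(A'Z+ZA)x$ after symmetrization; the second-order term $\tfrac{\epsilon}{2}\mathrm{tr}(\sigma\sigma'\partial^2_{xx}v^n)$ contributes $\epsilon\,\mathrm{tr}(Z\sigma\sigma')$; and the risk-sensitive quadratic term, after using $\partial_x v^n=2Zx$, the identity $\mathrm{tr}\big((\partial_x v^n)(\partial_x v^n)'\sigma\sigma'\big)=\|\sigma'\partial_x v^n\|^2$, and $\rho^2=\delta/(2\epsilon)$, contributes $\tfrac{1}{\rho^2}x'Z\sigma\sigma'Zx$; the running cost adds $x'Qx$. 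Grouping the terms quadratic in $x$ and requiring them to vanish for every $x$ yields exactly $\dot{Z}+A'Z+ZA+Q-Z(BB'-\tfrac{1}{\rho^2}\sigma\sigma')Z=0$, while the $x$-independent terms give $\dot\psi=-\epsilon\,\mathrm{tr}(Z\sigma\sigma')$. The terminal condition $v^n(T,x)=g(x)=x'Q_Tx$ forces $Z(T)=Q_T$ and $\psi(T)=0$; integrating $\psi$ backward from $T$ gives $\psi(t)=\epsilon\int_t^T\mathrm{tr}(Z(s)\sigma\sigma')\,ds$, and reading off the minimizer gives the optimal feedback $u^*_j(t)=-B'Zx$, as claimed.

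The main obstacle is not the algebra but the \emph{existence and nonnegativity} of $Z(\cdot)$ on the whole interval $[t,T]$. Because of the indefinite quadratic coefficient $BB'-\tfrac{1}{\rho^2}\sigma\sigma'$, the generalized Riccati equation is of $H_\infty$/risk-sensitive type and can exhibit finite escape time when the risk-sensitivity index $\delta$ (hence $\rho$) is too small relative to the horizon $T$; this is exactly why the statement is qualified by ``whenever it exists.'' I would therefore present the verification as conditional on the existence of a $C^1$, symmetric, nonnegative-definite solution on $[t,T]$, deducing $Z\ge 0$ from $Q,Q_T\ge 0$ either by a comparison argument against the risk-neutral Riccati equation (the limit $\rho\to\infty$) or directly from the value-function representation of $Z$. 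Finally, since the steps above only \emph{exhibit} a solution of the HJB equation, I would invoke the standard verification theorem to conclude that this $v^n$ is indeed $\inf_{u_j}L$ and that $u^*$ is optimal among admissible feedback controls.
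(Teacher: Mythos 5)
Your verification argument is correct: the quadratic ansatz $v^n=x'Z(t)x+\psi(t)$, the completion of the inner minimization to get $u^*=-B'Zx$, the identification of the quadratic-in-$x$ terms with the generalized Riccati equation (including the sign and the factor $\tfrac{1}{\rho^2}=\tfrac{2\epsilon}{\delta}$ on the $Z\sigma\sigma'Z$ term), and the backward integration of $\dot\psi=-\epsilon\,\mathrm{tr}(Z\sigma\sigma')$ all check out, as does your caveat about possible finite escape time of $Z$, which is exactly why the proposition is qualified by ``whenever it exists.'' The paper itself offers no proof of this proposition --- it is stated by citation to the risk-sensitive LQG literature --- and your verification is precisely the standard derivation underlying that reference, so there is nothing to reconcile; you even implicitly correct the missing transpose on $B$ in the feedback formula of Proposition~\ref{Prop2}.
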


Using  Proposition \ref{propott31}, one has the following result for any given trajectory $(m^n_t)_{t\geq 0},$ which enters the cost function in a particular way.

\begin{prop}
If $c$ is in the form $c=x'(Q(t)-\Lambda(t, m^n_t))x$, where $\Lambda$ is symmetric and continuous in $(t, m)$, then the generalized Riccati equation becomes
$$
\dot{Z}^*+A'Z^*+Z^*A+Q-\Lambda(t,m^n_t)-Z^*\left(BB'-\frac{1}{\rho^2}\sigma\sigma'\right)Z^*=0, Z^*(T)= Q_T,
$$
and
$$
v^n(t,x)=x'Z^*x+\epsilon\int_t^T {tr}(Z^*(s)\sigma\sigma')\ ds.
$$
\end{prop}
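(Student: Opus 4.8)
The plan is to reduce the statement to a direct application of Proposition \ref{propott31}. The essential observation is that in the best-response formulation the mean-field trajectory $(m^n_t)_{t\geq 0}$ is treated as an \emph{exogenous, given} process. Consequently, once this trajectory is fixed, $\Lambda(t,m^n_t)$ is a purely deterministic, symmetric, matrix-valued function of time alone; its continuity in $t$ follows from the assumed continuity of $\Lambda$ in $(t,m)$ together with the continuity of the given map $t\mapsto m^n_t$. Hence the running cost keeps exactly the log-quadratic shape required by Proposition \ref{propott31},
$$c(t,x)=x'\hat{Q}(t)x,\qquad \hat{Q}(t):=Q(t)-\Lambda(t,m^n_t),$$
with $\hat{Q}(\cdot)$ symmetric and continuous, while the drift $\bar f=A(t)x$, the input matrix $B$, the diffusion $\sigma(t)$, and the terminal weight $Q_T$ are all unchanged.

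First I would apply Proposition \ref{propott31} verbatim, with the state-weighting matrix $Q(\cdot)$ replaced throughout by $\hat{Q}(\cdot)$. This immediately yields the value function
$$v^n(t,x)=x'Z^*(t)x+\epsilon\int_t^T {tr}(Z^*(s)\sigma\sigma')\,ds,$$
together with the generalized Riccati equation carrying $\hat{Q}$ in place of $Q$,
$$\dot{Z}^*+A'Z^*+Z^*A+\hat{Q}-Z^*\Big(BB'-\tfrac{1}{\rho^2}\sigma\sigma'\Big)Z^*=0,\qquad Z^*(T)=Q_T,$$
and substituting $\hat{Q}(t)=Q(t)-\Lambda(t,m^n_t)$ produces precisely the asserted equation. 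If a self-contained argument is preferred, the same conclusion follows by inserting the quadratic ansatz $v^n=x'Z^*x+\epsilon\int_t^T {tr}(Z^*\sigma\sigma')\,ds$ directly into the risk-sensitive HJB equation of Proposition \ref{lemm1}: with the optimal control $u^*_j=-B'Z^*x$ from Proposition \ref{Prop2}, collecting the terms quadratic in $x$ reproduces the Riccati equation, collecting the $x$-independent terms reproduces the trace integral, and the terminal condition matches $g(x)=x'Q_Tx$. Crucially, this matching is a purely algebraic identity in the quadratic and constant coefficients and never uses the sign of the weighting matrix, so it is insensitive to the replacement of $Q$ by the possibly indefinite $\hat{Q}$.

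The one genuine subtlety — and the step I expect to be the main obstacle — is the implicit qualifier ``whenever it exists.'' Proposition \ref{propott31} assumes $Q\succeq 0$ and thereby guarantees a nonnegative-definite solution $Z$ on all of $[0,T]$. Here $\Lambda(t,m^n_t)$ may render $\hat{Q}(t)$ indefinite, so neither global existence on $[0,T]$ nor nonnegative-definiteness of $Z^*$ can be asserted a priori: the indefinite quadratic coefficient $BB'-\tfrac{1}{\rho^2}\sigma\sigma'$ arising from the risk-sensitive correction can drive the Riccati solution to a finite-escape (conjugate-point) blow-up in backward time, which is exactly the mechanism by which a risk-sensitive cost becomes infinite when the risk-sensitivity index $\delta$ is too small. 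I would therefore phrase the result as holding on any subinterval of $[0,T]$ on which the generalized Riccati equation admits a finite symmetric solution, and regard the verification of this existence (equivalently, the absence of a conjugate point) as the only part requiring care beyond the algebraic reduction above.
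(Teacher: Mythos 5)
Your proposal is correct and follows essentially the same route as the paper, which proves this proposition simply by invoking Proposition~3 with the state weighting $Q(t)$ replaced by $Q(t)-\Lambda(t,m^n_t)$ for the given exogenous trajectory $(m^n_t)_{t\ge 0}$. Your additional remarks on the possible indefiniteness of the modified weighting matrix and the resulting need for the ``whenever it exists'' qualifier on the Riccati solution are a sensible refinement that the paper leaves implicit.
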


\subsection{Macroscopic McKean-Vlasov equation} \label{secmacrotr}

Since  the controls used by the players  influence the mean-field limit via the state dynamics, we need to characterize the evolution of the mean-field limit as a function of the controls. The law of $m_t$ is the solution of the Fokker-Planck-Kolmogorov equation given by (\ref{fpk}) and the individual state dynamics follows the so-called macroscopic McKean-Vlasov equation
\begin{eqnarray}
d\bar{x}_j(t)=\left(\int_w f(t, \bar{x}_j(t),u_j^*(t),w) m_{t}(dw)\right) dt + \sqrt{\epsilon}\left(\int_w  \sigma(t, \bar{x}_j(t),u_j^*(t),w)m_{t}(dw)\right) d\mathbb{B}_{j}(t).
\end{eqnarray}

In order to obtain an error bound, we introduce the following notion: Given two measures $\mu$ and $\nu$ the Monge-Kontorovich metric (also called Wasserstein metric) between $\mu$ and $\nu$ is
$$\mathcal{W}_1(\mu,\nu)=\inf_{X\sim \mu, Y \sim \nu}\mathbb{E}|X-Y|.$$
In other words, let $E(\mu, \nu)$ be the set of probability measures $\mathbb{P}$ on the product space
such that the image of $\mathbb{P}$ under the projection on the first argument (resp. on the
second argument) is $\mu$ (resp. $\nu$). Then,
\begin{eqnarray}\mathcal{W}_1(\mu,\nu)=\inf_{\mathbb{P}\in E(\mu, \nu)}\int \int |z-z'| \mathbb{P}(dz,dz').
\end{eqnarray}
This is known indeed as a distance (it can be checked that the separation, the triangle inequality and positivity properties  are satisfied) and it metricizes the weak topology.

\begin{prop} \label{prottt1}
Under the conditions (i)-(vii), the following holds: For any $t,$ if the control law $\gamma^*_j(\cdot)$ is used, then there exists $\tilde{y}_t>0$ such that
$$\mathbb{E}\left( \parallel x^n_j(t)-\tilde{x}_j(t)\parallel\right)\leq \frac{\tilde{y}_t}{\sqrt{n}}.$$
Moreover, for any $T<\infty,$ there exists $C_T>0$ such that
\begin{eqnarray}\mathcal{W}_1\left(\mathcal{L}((x^n_j(t))_{t\in [0,T]}), \mathcal{L}((\tilde{x}_j(t))_{t\in [0,T]})\right)\leq \frac{C_T}{\sqrt{n}},
 \end{eqnarray} where $\mathcal{L}(X_t)$ denotes the law of the random variable $X_t$.
\end{prop}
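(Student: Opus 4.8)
The plan is to establish both bounds simultaneously through a synchronous coupling followed by a Gronwall argument, with the $1/\sqrt{n}$ rate originating from a law-of-large-numbers fluctuation estimate. First I would realize, on a common probability space, the coupled pair $(x^n_j,\tilde{x}_j)$ by driving both the $n$-particle system (SM) and the macroscopic McKean-Vlasov equation with the \emph{same} Brownian motions $\mathbb{B}_j$ and the same i.i.d.\ initial data $x_{j,0}$. Under (i)--(iii) and (v) the closed-loop drift $\tilde{f}(s,x,m):=\int_w f(s,x,\bar{\gamma}^*(s,x,m),w)\,m(dw)$ and the closed-loop diffusion $\underline{\sigma}$ are Lipschitz in $x$ and in the measure argument for the $\mathcal{W}_1$ metric, so the McKean-Vlasov SDE is well posed and, by the symmetry of the coefficients and independence of the $\mathbb{B}_j$, the family $(\tilde{x}_j)_j$ is i.i.d.\ with common deterministic marginal $m_t=\mathcal{L}(\tilde{x}_j(t))$.

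Next I would subtract the two It\^o integral forms and split the resulting drift and diffusion differences into three groups: (a) a self-state term $\tilde f(s,x^n_j,m^n_s)-\tilde f(s,\tilde{x}_j,m^n_s)$, dominated by $L\|x^n_j-\tilde{x}_j\|$; (b) a measure-coupling term comparing $m^n_s=\frac1n\sum_i\delta_{x^n_i(s)}$ with $\frac1n\sum_i\delta_{\tilde{x}_i(s)}$, dominated by $\frac{L}{n}\sum_i\|x^n_i-\tilde{x}_i\|$ via the Lipschitz-in-$\mathcal{W}_1$ property; and (c) the fluctuation term $\frac1n\sum_i f(s,\tilde{x}_j,u^*_j,\tilde{x}_i)-\int_w f(s,\tilde{x}_j,u^*_j,w)\,m_s(dw)$, together with the analogous splitting for $\underline{\sigma}$. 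Setting $\phi_n(t):=\mathbb{E}\sup_{s\le t}\|x^n_j(s)-\tilde{x}_j(s)\|$, which is independent of $j$ by exchangeability, I would control the stochastic-integral contributions in sup-norm through Doob and Burkholder--Davis--Gundy inequalities together with the regularity and boundedness of $\sigma$ from (ii)--(iii), so that groups (a)--(b) contribute $L'\int_0^t\phi_n(s)\,ds$.

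The crux is group (c). Conditioning on $\tilde{x}_j$, the summands $f(s,\tilde{x}_j,u^*_j,\tilde{x}_i)$ for $i\ne j$ are i.i.d.\ with conditional mean $\int_w f(s,\tilde{x}_j,u^*_j,w)\,m_s(dw)$ and variance bounded by $\|f\|_\infty^2$ thanks to (iii) and (vii); a variance computation then gives $\mathbb{E}\|\cdot\|^2\le C/n$, and Jensen's inequality yields the $C/\sqrt{n}$ rate, while the diagonal term $i=j$ and the $(n-1)/n$ normalization contribute only $O(1/n)$. Collecting all contributions produces the integral inequality $\phi_n(t)\le \frac{C_T}{\sqrt n}+L'\int_0^t\phi_n(s)\,ds$.

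Finally I would invoke Gronwall's lemma to obtain $\phi_n(t)\le \frac{C_T}{\sqrt n}e^{L't}$, giving the first claim with $\tilde{y}_t=C_Te^{L't}$ and, after taking the supremum over $[0,T]$, the path bound $\mathbb{E}\sup_{t\le T}\|x^n_j(t)-\tilde{x}_j(t)\|\le C_T/\sqrt n$. The Wasserstein estimate then follows because the constructed coupling is an admissible element of $E\big(\mathcal{L}((x^n_j)_{t\in[0,T]}),\mathcal{L}((\tilde{x}_j)_{t\in[0,T]})\big)$ for the sup-metric on path space, so $\mathcal{W}_1$ is dominated by $\phi_n(T)$. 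I expect the main obstacle to be the fluctuation term (c)---specifically, justifying the conditional independence in the presence of the feedback control $u^*_j=\bar{\gamma}^*_j(s,\tilde{x}_j,m_s)$ and verifying that the closed-loop coefficients stay Lipschitz in the $\mathcal{W}_1$ metric uniformly in $t$, which is exactly what keeps the Gronwall constant $L'$ finite on $[0,T]$.
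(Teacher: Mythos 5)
Your proposal is correct and follows essentially the same route as the paper's proof: a synchronous coupling of the $n$-particle system with the McKean--Vlasov limit, a variance (law-of-large-numbers) estimate showing the gap between the empirical averages of the coefficients and their mean-field integrals is $O(1/\sqrt{n})$, and a Gronwall argument to close the loop. Your version is in fact more carefully executed than the paper's sketch --- in particular the explicit three-way decomposition separating the interacting empirical measure from the i.i.d.\ one before invoking the variance bound, and the BDG control of the martingale part, are steps the paper leaves implicit --- but the underlying strategy is the same.
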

The last inequality  says that the error bound is at most of $O(\frac{1}{\sqrt{n}})$ for any fixed compact interval.
 The proof of this assertion follows the following steps:
Let $x^n_{j}(t)$ and $\tilde{x}_j(t)$ be the solutions of the two SDEs with initial gap less than $\frac{1}{\sqrt{n}}.$ Then, we take the difference between the two solutions. In a second step,
 use triangle inequality of norms and take the expectation. Gronwall inequality allows one to complete the proof. A detailed proof is provided in the Appendix.


\subsubsection{Risk-sensitive mean-field cost}
Based on the fact that $m^n_t$ converges weakly to $m_t$ under the admissible controls $(u^n_j(s),\ s\geq 0)\longrightarrow (u_j(s),\ s\geq 0)$ when $n$ goes to infinity, one can show the weak convergence of the risk-sensitive cost function (\ref{RSCF}) under the regularity conditions (vi) and (vii) on functions $c$ and $g$, i.e., as $n\rightarrow \infty$,
\begin{eqnarray}
\nonumber L(\bar{\gamma}_j,m^n_{[t, T]};t, \underline{x},\underline{m}) &\rightarrow& L(u_j,m_{[t, T]},t, \underline{x},\underline{m})\\
\nonumber & & =
\delta\log \mathbb{E}\left( e^{\frac{1}{\delta}[g(x_j(T))+\int_t^T c(s, x_{j}(s),u_{j}(s),m_s)\ ds]} \bigg| \ x_j(t)=\underline{x},m_t=\underline{m}\right).
\end{eqnarray}

Based on this limiting cost, we can construct the best  response to mean field in the limit. Given $\{m_s\}_{s\in[t, T]}$, we minimize $L(u_j,m_{[t, T]};t,x,m)$ subject to the state-dynamics constraints.
\subsection{Fixed-point problem}
{\color{black}We now define the mean field equilibrium problem as the following fixed-point problem.
\begin{defi} The mean field equilibrium problem (P) is one where 
each player solves the optimal control problem, i.e.,
$${\inf}_{u_j}\ \delta\log \mathbb{E}\left( e^{\frac{1}{\delta}[g(x_j(T))+\int_t^T c(s, x_{j}(s),u_{j}(s),m_s^*)\ ds]} \bigg| \ x_j(t)=\underline{x},m_t=\underline{m}\right),$$
subject to the dynamics of $x_{j}(t)$  given by the dynamics in Section \ref{secmacrotr}, where the mean field $m_t$ is replaced by $m^*_t$
and $\bar{m}_t^*$ is the mean of the optimal mean field trajectory.
The optimal feedback control $u_j^*[t,x,m^*]$ depends on $m^*$, and $m^*$ is the mean field reproduced by all the $u_j^*$, i.e., $m^*_t=m[t,u^*]$ solution of the Fokker-Planck-Kolmogorov forward equation (\ref{fpk}). The equilibrium is called an individual feedback mean field equilibrium if every player adopts an individual state-feedback strategy.
\end{defi}
}
Note that this problem differs from the risk-sensitive mean field stochastic optimal control problem where  the objective is
$$\delta\log \mathbb{E}\left( e^{\frac{1}{\delta}[g(x_j(T))+\int_t^T c(s, x_{j}(s),u_{j}(s),m_s[u])\ ds]} \bigg| \ x_j(t)=\underline{x},m_t=\underline{m}\right),$$ with $m_s[u]$ the distribution of the state dynamics $x_j(s)$ driven by the control $u_j.$

\subsection{Risk-sensitive FPK-McV equations} \label{counter1}
The  regular solutions to problem (P) introduced above are solutions to { HJB backward equation combined with FPK equation and macroscopic McKean-Vlasov version of the limiting individual dynamics}, i.e.,
\begin{eqnarray}
\nonumber d{x}_j(t)&=&\left(\int_w f(t, x_j(t),u_j^*(t),w) m_{t}(dw)\right) dt \\ 
\nonumber &+& \sqrt{\epsilon}\left(\int_w  \sigma(t, x_j(t),u_j^*(t),w)m_{t}(dw)\right) d\mathbb{B}_{j}(t),\\
\nonumber &&x_j(0)=x_{j,0}=x
\end{eqnarray}
\begin{eqnarray}
\nonumber 0&=&\partial_tv+\inf_{u_j}\left\{  f \cdot\partial_{x}v+\frac{\epsilon}{2}{tr}(\sigma\sigma'\partial^2_{xx}v)+\frac{\epsilon}{2\delta}\parallel \sigma\partial_x v\parallel^2+c\right\},\\
\nonumber && x_j:=x; \ \  v(T,x)=g(x)
\end{eqnarray}
\begin{eqnarray}
\nonumber \partial_t m_t&=&-D_x^1\left(m_t\int_w f(t,x,u^*,w)m_t(dw)\right)\\ 
\nonumber && +\frac{\epsilon}{2}D^2_{xx}\left(m_t\left(\int_w \sigma'(t, x,u^*,w)m_t(dw)\right) \right. \left.\cdot \left(\int_w \sigma(t,x,u^*,w)m_t(dw)\right)\right)\\ 
\nonumber && m_0(\cdot)\ \mbox{fixed}.
\end{eqnarray}

Then, the question of existence of a solution to the above system arises. This is a backward-forward system. Very little is known about the existence of a solution to such a system. In general, a solution may not exist as the following example demonstrates. 
\subsection{Non-existence of solution to backward-forward boundary value problems}\label{counter2}
There are many examples of systems of backward-forward equations which do not admit
solutions. As a very simple  example from \cite{zhou}, consider the system:
$$\dot{v}=m,\ \dot{m}=-v, m(0)=m_0; v(T)=-m_T.$$

It is obvious that the coefficients of this pair of backward-forward differential equations  are
all uniformly Lipschitz. However,   depending on $T$, this may not be
solvable for $m_0\neq 0.$
We can easily show that for $T = k\pi + 3\pi/4$ ($k$, a nonnegative integer), the
above two-point boundary value problem does not admit a solution for any
$m_0\neq 0$  and it admits infinitely many solutions for $m_0=0.$

  Following the same ideas, one can show that the system of stochastic differential equations (SDEs) $$d{v}=m dt+\sigma d\mathbb{B}(t),\ d{m}=-v dt+\nu d\mathbb{B}(t),$$ where $\mathbb{B}(t)$ is the standard Brownian motion in $\mathbb{R}$. With the initial conditions: $$m(0)=m_0\neq 0; v(T)=-m_T,$$ and $T=7\pi/4$, the system of SDEs has no solution.

  This example shows us that the system needs to be normalized and the boundary conditions will have to be well posed. In view of this,  we will introduce the notion of reduced mean field system in Section \ref{uni} to establish the existence of equilibrium for a specific class of risk-sensitive games.

\subsection{Risk-sensitive mean-field equilibria}
\begin{thm} \label{thmt1}
Consider a risk-sensitive mean-field stochastic differential game as formulated above. Assume that  $\sigma =\sigma(t)$ and there exists a unique pair $(u^*, m^*)$ such that

(i) The coupled backward-forward PDEs
\begin{eqnarray}
\nonumber \partial_tv^*+\inf_{u_j}\left\{  f^*\cdot\partial_{x}v^* +\frac{\epsilon}{2} {tr}(\sigma\sigma'\partial^2_{xx}v^*)\right. \left.+\frac{\epsilon}{2\delta}\parallel \sigma\partial_x v\parallel^2+c^*\right\}&=&0,\\
 \nonumber v(T,x)=g(x),\ m^*_0(x)\ {\textrm{fixed}}. 
 \end{eqnarray}
 \begin{eqnarray}
\nonumber \partial_t m^*_t&+&D^1_x\left(m^*_t\int_w f^*(t,x,u^*,w)m^*_t(dw)\right)  \\
\nonumber  &=&\frac{\epsilon}{2}D^2_{xx}\left(m^*_t\left(\int_w \sigma'm^*_t(dw)\right)\left(\int_w \sigma m^*_t(dw)\right)\right) 
\end{eqnarray} 
admit a pair a bounded nonnegative solutions $v^*,m^*$; and

{\color{black} (ii) $u^*$ minimizes the Hamiltonian, i.e., $f(t, x,u,m^*) \cdot \partial_x v^*+c(t, x,u,m^*)$.}

Under these conditions, the pair $(u^*,m^*)$ is a strongly time-consistent mean-field equilibrium and
$L(t,u^*,m^*)=v^*.$
In addition, if $c=x'(Q(t)-\Lambda_t(m^n_t))x$ where $\Lambda(t,\cdot)$ is a measurable symmetric matrix-valued function, then any convergent subsequence of optimal control laws $\bar{\gamma}^{\alpha(n)}_j$ leads to a best strategy for $m.$

\end{thm}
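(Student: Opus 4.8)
The plan is to read Theorem~\ref{thmt1} as a \emph{verification} statement: hypotheses (i)--(ii) hand us a candidate pair $(v^*,m^*)$ solving the coupled backward-forward system together with a Hamiltonian-minimizing $u^*$, and the task is only to certify that this candidate is a strongly time-consistent mean-field equilibrium with value $v^*$. Accordingly I would decouple the claim into two pieces: (a) optimality of $u^*$ against the \emph{frozen} mean field $m^*$, which is a pure stochastic-control verification; and (b) consistency of $m^*$, which is already encoded in the forward FPK equation of hypothesis~(i), so that the fixed point of Definition~(P) closes automatically once (a) holds.

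For part (a) I would reuse the logarithmic change of variable of Proposition~\ref{lemm1}. Setting $\phi^*=e^{v^*/\delta}$, the risk-sensitive HJB equation for $v^*$ becomes the linear terminal-value problem
\[
\partial_t\phi^*+\inf_{u_j}\Big\{ f\cdot\partial_x\phi^* +\tfrac{\epsilon}{2}\,\mathrm{tr}(\sigma\sigma'\partial^2_{xx}\phi^*)+\tfrac{1}{\delta}c\,\phi^*\Big\}=0,\qquad \phi^*(T,x)=e^{g(x)/\delta}.
\]
Since $\phi^*>0$ and $\partial_x\phi^*=\tfrac{1}{\delta}(\partial_x v^*)\phi^*$, the non-controlled term $\tfrac{\epsilon}{2}\mathrm{tr}(\sigma\sigma'\partial^2_{xx}\phi^*)$ (constant in $u_j$ because $\sigma=\sigma(t)$) drops out of the minimization, and the inner infimum reduces to minimizing $f\cdot\partial_x v^*+c$; this is exactly condition~(ii), so $u^*$ attains it. I would then fix $(t,\underline x)$, let $x_j(\cdot)$ solve the macroscopic McKean-Vlasov dynamics of Section~\ref{secmacrotr} with $m$ frozen at $m^*$ under an arbitrary admissible $u_j$, and apply It\^o's formula to
\[
M_s=\exp\!\Big(\tfrac{1}{\delta}\!\int_t^s c(r,x_j(r),u_j(r),m^*_r)\,dr\Big)\,\phi^*(s,x_j(s)).
\]
The drift of $M_s$ equals the positive prefactor times the bracketed HJB expression evaluated at $u_j$, which by the displayed equation is $\ge 0$ for every $u_j$ and $=0$ for $u^*$; hence $M_s$ is a submartingale in general and a martingale under $u^*$. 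Taking expectations, with $M_t=\phi^*(t,\underline x)$ and $M_T=\exp(\tfrac1\delta[g(x_j(T))+\int_t^T c\,dr])$, yields $\phi^*(t,\underline x)\le\mathbb{E}[e^{\cdots}]$, with equality at $u^*$; applying the increasing map $\delta\log(\cdot)$ gives $v^*(t,\underline x)\le L(u_j,m^*_{[t,T]};t,\underline x,\underline m)$, equality at $u^*$. As this holds for every initial $(t,\underline x)$, $u^*$ is a best response to $m^*$ from every time, $L(t,u^*,m^*)=v^*$, and the equilibrium is strongly time-consistent; consistency of $m^*$ is the forward equation in~(i).

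For the second assertion I would specialize to the log-quadratic case, where Proposition~\ref{propott31} and the proposition following it give the $n$-dependent best response in closed form, $\bar\gamma^{n}_j=-B'Z^n x$, with $Z^n$ the nonnegative solution of the generalized Riccati equation whose zeroth-order term carries $\Lambda(t,m^n_t)$. By Proposition~\ref{prottt1}, $m^n$ converges to $m$ at rate $O(n^{-1/2})$ in $\mathcal{W}_1$, so $\Lambda(t,m^n_t)\to\Lambda(t,m_t)$ by continuity of $\Lambda$ in its measure argument; continuous dependence of the Riccati flow on its coefficients then forces $Z^n\to Z$, the solution of the limiting Riccati equation, whence $\bar\gamma^{n}_j\to -B'Zx$. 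Because the action sets $U_j$ are closed and bounded (assumption~(iv)), every subsequence of $\{\bar\gamma^{n}_j\}$ admits a convergent sub-subsequence, and the stability just described identifies each such limit with the best strategy for $m$.

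The main obstacle is making the martingale step in part (a) rigorous: a priori $M_s$ is only a local martingale, so I must upgrade it to a true martingale before equating $M_t$ with $\mathbb{E}[M_T]$. Here the boundedness hypotheses (vi)--(vii) on $c,g$ make the exponential functional bounded, and boundedness of $v^*$ (hence of $\phi^*$ and its derivatives on the relevant range) together with boundedness of $\sigma$ give uniform integrability of the stochastic integral, legitimizing both the expectation and the submartingale inequality. A secondary difficulty, for the second assertion, is that the limit of the optimal laws is characterized only along subsequences unless the limiting Riccati equation has a unique nonnegative solution; I would note that the uniqueness assumed in the hypothesis of the theorem forces all such subsequential limits to coincide, so the convergent-subsequence phrasing is the conservative but accurate conclusion.
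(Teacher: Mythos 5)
Your proposal follows essentially the same route as the paper's own proof: it verifies that hypotheses (i)--(ii) make $u^*$ a best response to the frozen mean field $m^*$ via the risk-sensitive HJB equation and the exponential change of variables of Proposition~\ref{lemm1}, closes the fixed point through the forward FPK consistency of $m^*$, and handles the quadratic-cost subsequence claim via the $O(1/\sqrt{n})$ convergence of Proposition~\ref{prottt1} plus continuity. Your explicit submartingale/martingale verification and the Riccati-stability step simply make rigorous what the paper asserts more tersely, so there is no substantive divergence.
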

\begin{proof} See the Appendix.
\end{proof}
\begin{rem}
This result can be extended to finitely multiple classes of players (see~\cite{HuangCIS, bardi, lasry1} for discussions).
To do so, consider a finite number of classes indexed by $\theta\in\Theta.$ The individual dynamics are indexed by $\theta,$ i.e. the function $f$ becomes $f_{\theta}$ and $\sigma$ becomes $\sigma_{\theta}.$ This means that the indistinguishability property is not satisfied anymore. The law depends on $\theta$ (it is not invariant by permutation of index). However, the invariance property  holds within each class. This allows us to establish a weak convergence of the individual dynamics of each generic player for each class, and we obtain $\tilde{x}_{\theta}(t).$ The multi-class mean-field equilibrium will be defined  by a system for each class and the classes are interdependent via the mean field and the value functions per class.\ged
\end{rem}
\subsubsection*{Limiting behavior with respect to $\epsilon$ }
We scale the parameters $\delta,\epsilon$ and $\rho$ such that $\delta=2\epsilon\rho^2.$
The PDE given in Proposition \ref{lemm1} becomes
$$  \partial_tv+\inf_{u}\left\{  f^*\cdot\partial_{x}v
+\frac{\epsilon}{2}{tr}(\sigma\sigma'\partial^2_{xx}v)+\frac{1}{4\rho^2}\parallel \sigma\partial_{x} v\parallel^2+c^*\right\}=0,\
 v(T,x)=g(x).$$
When the parameter $\epsilon$ goes to zero, one arrives at a deterministic PDE. This situation captures the large deviation limit:
$$
\partial_tv+\inf_{u}\left\{  f^*\cdot\partial_{x}v +\frac{1}{4\rho^2}\parallel \sigma\partial_{x} v\parallel^2+c^*\right\}=0,\ v(T,x)=g(x).$$

{\color{black}
\subsection{Equivalent stochastic mean-field problem}
In this subsection, we formulate an equivalent $(n+1)-$player game in which the state dynamics of the $n$ players are given by the system (ESM) as follows:
\begin{equation}
 \tag{ESM} 
\begin{array}{lll}
\nonumber dx_j^n(t)&=&\left(\displaystyle\int_w f(t,x_j^n(t),u^n_j(t),w) m^n_{t}(dw)  +\sigma \zeta (t) \right) dt+ \sqrt{\epsilon}  \sigma d\mathbb{B}_{j}(t),\\
\nonumber  x_j^n(0)&=&x_{j,0}\in \mathbb{R}^{k},\ k\geq 1,
j\in\{1,\ldots,n\},
\end{array} \end{equation}
where $\zeta (t)$ is the control parameter of the ``fictitious" $(n+1)-$th player. In parallel to (\ref{RSCF}), we define the risk-neutral cost function of the $n$ players as follows:
\begin{flushleft}
$\tilde{L}(\bar{\gamma}_j,\bar{\zeta},x^n_j,m_{[0,T]}^n; t,\underline{x},\underline{m})=$
\end{flushleft}
\begin{equation}\label{ESMRSCF}
\mathbb{E}\left( g(x^n_{j} (T))+\int_t^T c(s, x_{j}^n(s),u_{j}^n(s),m^n_s)\ ds -\rho^2\int_t^T \parallel \zeta (s)\parallel^2\ ds\  \bigg| x_j(t) =  \underline{x}, m^n_t =\underline{m}\right),
\end{equation}
where $\bar{\zeta}: [0, T]\times \mathbb{R}^k \rightarrow U_{n+1}$ is the individual feedback control strategy of the fictitious Player $n+1$ that yields an admissible control action $\zeta(t)$ in a set of feasible actions $U_{n+1}$.

Every player $j\in\{1,2,\ldots,n\}$ minimizes $\tilde{L}$ by taking the worst over the feedback strategy $\bar{\zeta}$ of player $n+1$ which is piecewise continuous in $t$ and Lipschitz in $x_j.$ We refer to this game described by (ESM) and (\ref{ESMRSCF}) as the {\it robust mean-field game}. In the following Proposition, we describe the connection between the mean-field risk-sensitive game problem described in (SM) and (\ref{RSCF}) and the robust mean-field game problem described in (ESM) and (\ref{ESMRSCF}),

\begin{prop}\label{proo}Under the regularity assumptions (i)-(vii), given a mean field $m^n_t$, the value functions of the risk-sensitive game and the robust game problems are identical, and the mean-field best-response control strategy of the risk-sensitive stochastic differential game is identical to the one for the corresponding robust mean-field game.
\end{prop}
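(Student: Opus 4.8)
The plan is to carry out the argument at the level of the dynamic programming (Isaacs) equation and then close it by uniqueness. For a \emph{fixed} exogenous mean field $m^n_t$, the robust game defined by (ESM) and (\ref{ESMRSCF}) is a standard two-player zero-sum differential game in which Player $j$ minimizes over $u_j$ while the fictitious Player $n+1$ maximizes over $\zeta$. First I would introduce the upper-value function $\tilde v^n(t,x_j,\underline m):=\inf_{u_j}\sup_{\zeta}\tilde L$ and, exactly as in the proof of Proposition \ref{lemm1} (It\^o's formula together with the dynamic programming principle for the augmented drift $f+\sigma\zeta$ and diffusion $\sqrt{\epsilon}\sigma$), derive its Isaacs equation
\begin{eqnarray}
\nonumber \partial_t\tilde v^n+\inf_{u_j}\sup_{\zeta}\left\{ (f+\sigma\zeta)\cdot\partial_{x_j}\tilde v^n+\frac{\epsilon}{2}\textrm{tr}(\sigma\sigma'\partial^2_{x_jx_j}\tilde v^n)+c-\rho^2\parallel\zeta\parallel^2\right\}&=&0,\\
\nonumber \tilde v^n(T,x_j)&=&g(x_j).
\end{eqnarray}

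Next, because the hypothesis $\sigma=\sigma(t)$ renders the diffusion independent of both the control and the state, the adversary's contribution decouples from $u_j$: the inner $\sup_\zeta$ does not interact with $u_j$, so the order of $\inf_{u_j}$ and $\sup_\zeta$ is immaterial (the Isaacs condition holds trivially) and the inner maximization is an unconstrained strictly concave quadratic in $\zeta$. Solving it yields the worst-case perturbation $\zeta^*=\frac{1}{2\rho^2}\sigma'\partial_{x_j}\tilde v^n$ and the optimized value $\frac{1}{4\rho^2}\parallel\sigma\partial_{x_j}\tilde v^n\parallel^2$. Substituting back produces a reduced HJB equation for $\tilde v^n$ that no longer contains $\zeta$. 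The key step is then to match constants: using the scaling $\rho=(\tfrac{\delta}{2\epsilon})^{1/2}$ of Proposition \ref{propott31} one has $\frac{1}{4\rho^2}=\frac{\epsilon}{2\delta}$, so the reduced equation reads
\begin{eqnarray}
\nonumber \partial_t\tilde v^n+\inf_{u_j}\left\{ f\cdot\partial_{x_j}\tilde v^n+\frac{\epsilon}{2}\textrm{tr}(\sigma\sigma'\partial^2_{x_jx_j}\tilde v^n)+\frac{\epsilon}{2\delta}\parallel\sigma\partial_{x_j}\tilde v^n\parallel^2+c\right\}&=&0,
\end{eqnarray}
which, together with its terminal data $g$, is \emph{identical} to the risk-sensitive HJB equation (\ref{HJBFEprop1}) satisfied by $v^n$.

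From here the conclusion follows by uniqueness: under the regularity conditions (i)--(vii) the common semilinear parabolic problem admits a unique twice continuously differentiable solution, hence $\tilde v^n\equiv v^n$, establishing equality of the two value functions. Finally, since $\sigma$ is control-free, in both problems the minimizing $u_j$ is the argmin of the single expression $f\cdot\partial_{x_j}v^n+c$ (the Hessian and quadratic-gradient terms are $u_j$-independent), so the two best-response maps coincide pointwise, giving the asserted identity of optimal strategies.

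I expect the only genuine obstacle to lie in the uniqueness/verification step, namely ensuring that the smooth value function built from either side is \emph{the} unique solution of the common HJB problem, and that the formal interchange of $\inf$ and $\sup$ is legitimate. Both are defused by the standing hypotheses rather than by new estimates: the interchange is free precisely because $\sigma=\sigma(t)$ eliminates any coupling between $u_j$ and $\zeta$, and uniqueness follows from the uniform boundedness in (i)--(vii) through a standard comparison argument for the associated parabolic PDE. A purely probabilistic alternative would invoke the Legendre-type variational representation of $\delta\log\mathbb{E}\,e^{\,\cdot/\delta}$ to identify $\tilde L$'s saddle value with the exponential functional directly, but the HJB-matching route is cleaner and dovetails with Proposition \ref{lemm1}.
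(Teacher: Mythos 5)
Your proposal is correct and follows essentially the same route as the paper: write the Hamilton--Jacobi--Isaacs equation for the upper value $\tilde v^n$, invoke the Isaacs condition via separability of $u_j$ and $\zeta$, solve the inner concave quadratic maximization in $\zeta$ to produce the term $\frac{1}{4\rho^2}\parallel\sigma\partial_{x_j}\tilde v^n\parallel^2$, and identify the resulting PDE with the risk-sensitive HJB of Proposition~\ref{lemm1} under $\rho^2=\frac{\delta}{2\epsilon}$. Your explicit appeal to uniqueness of the common parabolic problem to conclude $\tilde v^n\equiv v^n$ is a slightly more careful rendering of the final step, which the paper leaves implicit.
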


\begin{proof}
 Let 
$\tilde{v}^n=\inf_{{u}_j}\sup_{{\zeta}}\tilde{L}({u}_j,{\zeta},x^n_j,m_{[0, T]}^n,t,x_j,\underline{m})$ denote the upper-value function associated with this  robust mean-field game. Then, under the regularity assumptions (i)-(vii), if $\tilde{v}^n$ is $C^1$ in $t$ and $C^2$ in $x$, it satisfies the Hamilton-Jacobi-Isaacs (HJI) equation
\begin{eqnarray}\label{H3}
\inf_{u}\sup_{\zeta}\left\{ \partial_t\tilde{v}^n_j+ (f+\sigma\zeta)\cdot \partial_{x_j}\tilde{v}^n+c-\rho^2\parallel \zeta \parallel^2 +\frac{\epsilon}{2}{tr}(\partial^2_{x_jx_j}\tilde{v}^n \sigma\sigma') \right\}&=&0, \\
\nonumber \tilde{v}^n(T,x_j)&=&g(x_j).
\end{eqnarray}
Note that (\ref{H3}) can be rewritten as $\inf_{u}\sup_{\zeta} H^3$, where $$H^3:=H+(\sigma\zeta)\cdot \partial_{x_j}\tilde{v}^n -\rho^2\parallel \zeta\parallel^2$$ is the Hamiltonian associated with this robust game.

Since the dependence on $u$ and $\zeta$ above are separable, the Isaacs condition (see \cite{basar1999dynamic}) holds, i.e., $$\inf_{u}\sup_{\zeta} H^3=\sup_{\zeta}\inf_{u} H^3$$ and hence the function $\tilde{v}^n_j$ satisfies the following after obtaining the best-response strategy for $\zeta$:
\begin{eqnarray}\label{pde2}
-\partial_t\tilde{v}^n&=& \inf_{u}\left\{ f\cdot\partial_{x_j}\tilde{v}^n+c+\frac{1}{4\rho^2}\parallel \sigma' \partial_{x_j}\tilde{v}^n\parallel^2
 +\frac{\epsilon}{2}{\textrm{tr}}(\partial^2_{x_jx_j}\tilde{v}^n \sigma\sigma')\right\}.\\
 \nonumber \tilde{v}^n(T,x_j)&=&g(x_j).
\end{eqnarray}
Note that the two PDEs, (\ref{pde2}) and the one given in Proposition~\ref{lemm1}, are identical with $\rho^2=\frac{\delta}{2\epsilon}$. Moreover, the optimal cost and the optimal control laws in the two problems are the same. 
\end{proof}
\begin{rem}
The FPK forward equation will have to be modified to include the control of fictitious player in the robust mean field game formulation accordingly by including the term $\sigma \zeta$ in (ESM). Hence the mean field equilibrium solutions to the two games are not necessarily identical.
\end{rem}
}

\section{ Linear state dynamics} \label{uni}
In this section, we analyze a specific class of risk-sensitive games where
  state  dynamics are linear and do not depend explicitly on the mean field.
We first state a related  result from \cite{pierrelouis,olivieruni} for the risk-neutral case.
\begin{thm}[\cite{pierrelouis}] \label{mainresultofPierreLouis}
Consider the reduced mean field system (rMFG):
\begin{eqnarray}
\nonumber \partial_xv+{H}(x,\nabla_xv, m_t(x))+\frac{\sigma^2}{2}\partial^2_{xx}v &=&0,\\
\nonumber\partial_xm_t+\textrm{div} (m_t\partial_p{H}(x,\nabla_xv, m_t(x))-\frac{\sigma^2}{2}\partial^2_{xx} \nonumber m_t&=&0,\\
\nonumber m_0(\cdot) \ \mbox{\textrm{fixed}},  v(T, \cdot) \ \mbox{\textrm{fixed}}, &&\\
\nonumber v,m \ \mbox{ \textrm {are 1-periodic.}}, &&\\
\nonumber x\in (0,1)^{d}:=\mathcal{X},
\end{eqnarray} 
where $H$ is the Legendre transform (with respect to the control) of the instantaneous cost function.

 Suppose that
$(x,p,z)\longmapsto H(x,p,z)$ is twice continuously differentiable with the respect to $(p,z)$ and for all $(x,p,z)\in \mathcal{X}\times \mathbb{R}^p\times \mathbb{R}_{+}^*,$
$$\left( \begin{array}{cc}
\partial^2_{pp}H(x,p,z) & \frac{1}{2}\partial_{pz}^2H(x,p,z)\\
\frac{1}{2}[\partial_{pz}^2H(x,p,z)]' & -\frac{1}{z}\partial_z H(x,p,z)
\end{array}\right)\succ 0
$$

Then, there exists at most one smooth solution to the (rMFG).
\end{thm}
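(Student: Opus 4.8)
The plan is to adapt the Lasry--Lions monotonicity argument to the present local-coupling setting, where the $2\times 2$ matrix condition plays the role of the joint convexity/monotonicity hypothesis. Suppose $(v_1,m_1)$ and $(v_2,m_2)$ are two smooth solutions of (rMFG) sharing the same data $m_0(\cdot)$ and $v(T,\cdot)$, and set $\bar v=v_1-v_2$, $\bar m=m_1-m_2$. Writing $p_i=\nabla_x v_i$ and $z_i=m_i$, I would subtract the two HJB equations and the two FPK equations to get evolution equations for $\bar v$ and $\bar m$. The central object is the cross quantity $t\mapsto\int_{\mathcal X}\bar v\,\bar m\,dx$: I compute its time derivative, substitute $\partial_t\bar v$ and $\partial_t\bar m$ from the difference equations, and integrate by parts in $x$.

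First I would dispose of the second-order terms: the two contributions $-\frac{\sigma^2}{2}\int\bar m\,\partial^2_{xx}\bar v$ and $+\frac{\sigma^2}{2}\int\bar v\,\partial^2_{xx}\bar m$ cancel after integration by parts, since the Laplacian is self-adjoint and the $1$-periodicity kills all boundary contributions. Integrating the resulting identity over $t\in[0,T]$, the left-hand side becomes $\int\bar v(T)\bar m(T)-\int\bar v(0)\bar m(0)$, which vanishes because $\bar v(T,\cdot)=0$ (common terminal datum) and $\bar m(0,\cdot)=0$ (common initial datum). Transferring one derivative from the divergence-form FPK term onto $\bar v$, this leaves
\[
0=\int_0^T\!\!\int_{\mathcal X}\Big\{-[H(x,p_1,z_1)-H(x,p_2,z_2)]\,\bar m+\nabla_x\bar v\cdot\big(z_1\partial_pH(x,p_1,z_1)-z_2\partial_pH(x,p_2,z_2)\big)\Big\}\,dx\,dt.
\]

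The heart of the argument is to recognize this integrand as a pointwise-definite quadratic form. Setting $q=\nabla_x\bar v=p_1-p_2$ and $w=\bar m=z_1-z_2$, I would write both bracketed differences as line integrals along the segment $s\mapsto(p(s),z(s))=(p_2+sq,\,z_2+sw)$, $s\in[0,1]$, via the fundamental theorem of calculus (legitimate since $H$ is $C^2$ in $(p,z)$). Expanding, the two terms containing $\partial_pH$ -- namely $-w\,(q\cdot\partial_pH)$ arising from $H_1-H_2$ and $+w\,(q\cdot\partial_pH)$ arising from differentiating $z\,\partial_pH$ -- cancel identically, and the integrand collapses to
\[
\int_0^1 z(s)\left[\,q'\,\partial^2_{pp}H\,q+w\,(q\cdot\partial^2_{pz}H)-\tfrac{1}{z(s)}\,\partial_zH\,w^2\,\right]ds,
\]
where the derivatives of $H$ are evaluated at $(x,p(s),z(s))$. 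The bracket is exactly the quadratic form in $(q,w)$ associated with the $2\times 2$ block matrix in the hypothesis, so its stated positive-definiteness makes the whole space-time integral nonnegative; being equal to zero, the integrand must vanish a.e., forcing $q=0$ and $w=0$ wherever $z(s)>0$.

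Finally I would upgrade this to full equality of the solutions. The nondegeneracy $\sigma^2>0$ guarantees, by the strong maximum principle for the uniformly parabolic FPK equation, that $m_1,m_2>0$ for $t>0$; hence $z(s)>0$ along the whole segment and the previous step yields $\bar m\equiv0$ and $\nabla_x\bar v\equiv0$. Thus $m_1=m_2$ and $\bar v$ depends on $t$ alone; inserting $\nabla_x\bar v=0$, $\partial^2_{xx}\bar v=0$ and $m_1=m_2$ back into the difference of the HJB equations gives $\partial_t\bar v=0$, and the terminal condition $\bar v(T,\cdot)=0$ then forces $\bar v\equiv0$. This proves $(v_1,m_1)=(v_2,m_2)$. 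I expect the main obstacle to be precisely the strict positivity of $m$ needed to keep $z(s)$ bounded away from zero and to make the quadratic form control $(q,w)$ on any set where a density might vanish; by contrast, the $\partial_pH$ cancellation, the integration by parts, and the vanishing of the time-boundary terms are routine once periodicity and the matched terminal/initial data are invoked.
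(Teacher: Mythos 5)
Your argument is correct and is essentially the same approach the paper itself uses: although the paper only cites this theorem from the literature without reproving it, its Appendix proof of the risk-sensitive analogue (Theorem~\ref{secondmainresult}) runs on exactly this Lasry--Lions cross-duality computation --- differentiate $\int(\hat v_2-\hat v_1)(\hat m_2-\hat m_1)\,dx$, cancel the Laplacians by periodic integration by parts, and reduce the remaining integrand to the quadratic form of the $2\times2$ block matrix, with your fundamental-theorem-of-calculus line integral along $s\mapsto(p_2+sq,\,z_2+sw)$ playing the role of the paper's $\lambda$-interpolation $C_\lambda$ and $\tfrac{d}{d\lambda}(C_\lambda/\lambda)$. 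The only substantive difference is that you integrate over $[0,T]$ and invoke the strong maximum principle to guarantee $m>0$ where the quadratic form must control $(q,w)$, a point the paper's monotonicity argument leaves implicit.
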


\begin{rem}
We have a number of observations and notes.
 \begin{itemize}
   \item The Hamilitonian function $H$ in the result above requires a special structure. Instead of a direct dependence on the mean field distribution $m_t$, its dependence on the mean field is through the value of $m_t$ evaluated at state $x$. 
   \item
For  global dependence on $m,$ a sufficiency condition for uniqueness can be found in \cite{lasry1} for the case where the Hamiltonian is separable, i.e.,  $H(x,p,m)=\xi(x,p)+\tilde{f}(x,m)$ with $\tilde{f}$ monotone
in $m$ and $\xi$ strictly convex in $p.$\item
The solution of (rMFG) can be unique even if the above  conditions are violated. Further, the uniqueness condition is independent of the horizon of the game.
\item
For the linear-quadratic mean field case, it has been shown in \cite{bardi} that the normalized  system may have a unique i.i.d. solution or infinitely many solutions depending on the system parameters. See also \cite{beso} for recent analysis on risk-neutral linear-quadratic mean field games.
\end{itemize}\ged
\end{rem}
The next result provides the counterpart of   Theorem \ref{mainresultofPierreLouis}   in the risk-sensitive case.
It provides sufficient conditions for having at most one smooth solution in the risk-sensitive mean field system by exploiting the presence of the additive quadratic term (which is strictly convex in $p$).
\begin{thm} \label{secondmainresult} Consider the risk-sensitive (reduced) mean field system (RS-rMFG).
Let $\delta>0,$ and $H(x,p,z)$ be twice continuously differentiable in $(p,z)\in \mathbb{R}^d\times\mathbb{R}_{+},$ satisfying the following conditions:
\begin{itemize}
\item $H$ is strictly convex in $p,$
\item $H$ is decreasing in $z,$
\item $\left( -\frac{\partial_z H}{z}\right)\cdot\left( \partial^{2}_{pp}H \right)\succ (\partial^2_{pz}H-\frac{\epsilon\sigma^2}{2\delta}p/z)'\cdot(\partial^2_{pz}H -\frac{\epsilon\sigma^2}{2\delta}p/z)$.
\end{itemize}
Then, (RS-rMFG) has at most one smooth solution.
\end{thm}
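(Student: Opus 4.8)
The plan is to adapt the Lasry--Lions monotonicity (duality) argument to this local-coupling, risk-sensitive system, the one genuinely new feature being that the strictly convex term $\frac{\epsilon\sigma^2}{2\delta}\|\nabla_x v\|^2$ is carried by the Hamilton--Jacobi--Bellman equation of (RS-rMFG) but \emph{not} by the drift of the associated Fokker--Planck--Kolmogorov equation (since $\sigma=\sigma(t)$ is control-independent, the minimizer in Proposition~\ref{lemm1} ignores that term, so the transport drift remains $\partial_pH$). I would argue by contradiction: suppose $(v_1,m_1)$ and $(v_2,m_2)$ are two smooth solutions with the same data $m_0(\cdot)$ and $g(\cdot)$. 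Writing $\bar v=v_1-v_2$, $\bar m=m_1-m_2$, $p_i=\nabla_x v_i$, $z_i=m_i$, I study $t\mapsto\int_{\mathcal X}\bar v\,\bar m\,dx$. Because $v_1(T,\cdot)=v_2(T,\cdot)=g$ and $m_1(0,\cdot)=m_2(0,\cdot)=m_0$, both endpoints vanish, so $\int_0^T\frac{d}{dt}\int_{\mathcal X}\bar v\,\bar m\,dx\,dt=0$.

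Next I differentiate in $t$ and substitute the two equations. Writing $\mathcal H(x,p,z):=H(x,p,z)+\frac{\epsilon\sigma^2}{2\delta}\|p\|^2$ for the full HJB nonlinearity, the two $\frac{\sigma^2}{2}$-Laplacian contributions cancel after integrating by parts (1-periodicity removes every boundary term), leaving
$$\frac{d}{dt}\int_{\mathcal X}\bar v\,\bar m\,dx=-\int_{\mathcal X}(\mathcal H_1-\mathcal H_2)(z_1-z_2)\,dx+\int_{\mathcal X}(p_1-p_2)\cdot\big(z_1\,\partial_pH_1-z_2\,\partial_pH_2\big)\,dx,$$
where $\mathcal H_i,\partial_pH_i$ are evaluated at $(x,p_i,z_i)$. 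Integrating over $[0,T]$ and using the vanishing endpoints yields the key identity $0=\int_0^T\!\int_{\mathcal X}Q\,dx\,dt$, with $Q$ the pointwise integrand above.

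The core of the proof is to show $Q\ge0$, with equality only when $(p_1,z_1)=(p_2,z_2)$. I would integrate along the segment $(p(s),z(s))=(p_2+s(p_1-p_2),z_2+s(z_1-z_2))$, $s\in[0,1]$, by the fundamental theorem of calculus. The first-order terms produced by $\partial_pH$ cancel exactly between the two pieces of $Q$, leaving
$$Q=\int_0^1\Big(z(s)\,\delta p'\,\partial^2_{pp}H\,\delta p+\delta z\,\delta p'\big[z(s)\,\partial^2_{pz}H-\tfrac{\epsilon\sigma^2}{\delta}\,p(s)\big]-\partial_zH\,(\delta z)^2\Big)\,ds,$$
with $\delta p=p_1-p_2$, $\delta z=z_1-z_2$, and all $H$-derivatives taken at $(x,p(s),z(s))$. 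The residual vector $\tfrac{\epsilon\sigma^2}{\delta}p(s)$ is precisely the footprint of the risk-sensitive quadratic term and is what shifts the off-diagonal block away from $\partial^2_{pz}H$. Reading the integrand as a quadratic form in $(\delta p,\delta z)$ and taking the Schur complement with respect to the entry $-\partial_zH$ (positive, since $H$ decreases in $z$) after dividing by $z(s)>0$, positivity of $Q$ reduces to exactly the hypothesis
$$\Big(-\tfrac{\partial_zH}{z}\Big)\big(\partial^2_{pp}H\big)\succ\big(\partial^2_{pz}H-\tfrac{\epsilon\sigma^2}{2\delta}p/z\big)'\big(\partial^2_{pz}H-\tfrac{\epsilon\sigma^2}{2\delta}p/z\big),$$
while strict convexity of $H$ in $p$ supplies $\partial^2_{pp}H\succ0$. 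Thus all three assumptions combine to make the associated block matrix positive definite, so $Q>0$ unless $\delta p=\delta z=0$.

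Finally, combining $Q\ge0$ with $\int_0^T\!\int_{\mathcal X}Q=0$ forces $Q\equiv0$, hence $\nabla_x v_1=\nabla_x v_2$ and $m_1=m_2$ throughout $[0,T]\times\mathcal X$; feeding these back into the HJB equation gives $\partial_t\bar v=0$, and with $\bar v(T,\cdot)=0$ this yields $v_1\equiv v_2$, establishing at most one smooth solution. I expect the main obstacle to be the careful bookkeeping of the asymmetry between $\mathcal H$ (which carries the quadratic term) and the transport drift $\partial_pH$ (which does not): this asymmetry is exactly what generates the non-standard $p/z$ shift, and hence the precise form of the hypothesis, and one must be attentive to the positivity $m>0$ used to divide by $z(s)$ and to the regularity/periodicity used to discard all boundary terms. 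As a consistency check, letting $\epsilon\to0$ removes the shift and the criterion collapses to the risk-neutral matrix condition of Theorem~\ref{mainresultofPierreLouis}.
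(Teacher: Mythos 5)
Your argument is correct and is essentially the paper's own proof: both apply the Lasry--Lions cross-duality computation to $\int_{\mathcal X}(v_1-v_2)(m_1-m_2)\,dx$, cancel the viscous terms by integration by parts, and exhibit the resulting integrand as an integral along the convex segment joining $(p_2,z_2)$ to $(p_1,z_1)$ of a quadratic form in $(\delta p,\delta z)$ whose positive definiteness (via the Schur complement, with the off-diagonal block shifted by $\tfrac{\epsilon\sigma^2}{2\delta}p/z$ because the quadratic gradient term sits in the HJB equation but not in the transport drift) is precisely the stated hypothesis. The paper packages the same interpolation as the auxiliary quantity $C_\lambda/\lambda$ and its $\lambda$-derivative, but the computation and conclusion are identical.
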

\begin{proof}
See the Appendix.
\end{proof}

\begin{rem}
We observe that in contrast to Theorem  \ref{mainresultofPierreLouis} (risk-neutral case), the sufficiency condition for having at most one smooth solution in (RS-rMFG) now depends on the variance term.\ged
\end{rem}


\section{Numerical Illustration}\label{numerical}
In this section, we provide two numerical examples to illustrate the risk-sensitive mean-field game under  affine state dynamics and McKean-Vlasov dynamics.

\subsection{Affine state dynamics}


\begin{figure}
\begin{center}
  \includegraphics[scale=0.6]{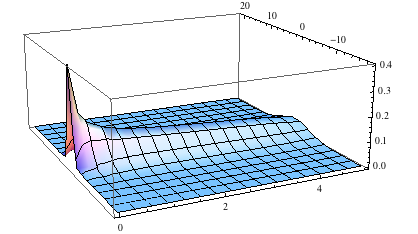}\\
  \caption{The evolution of distribution $m^*_t, 0\leq t\leq 5, -19\leq x\leq21$.}\label{mfg1M1018}
\end{center}
\end{figure}

\begin{figure}
\begin{center}
  \includegraphics[scale=0.6]{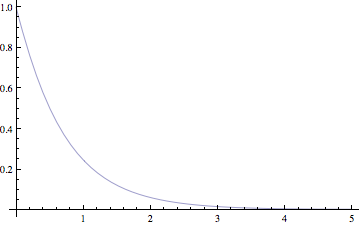}\\
  \caption{Mean value $\mathbb{E}(m^*_t)$ as a function of time, $0\leq t\leq 5$.}\label{mfg1Mean1018}
\end{center}
\end{figure}

\begin{figure}
\begin{center}
  \includegraphics[scale=0.6]{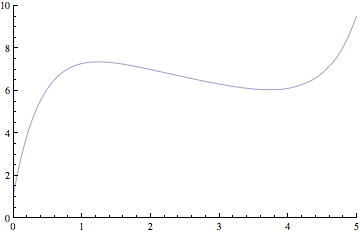}\\
  \caption{Variance of the distribution $m^*_t$ as a function of time, $0\leq t\leq 5$.}\label{mfg1Variance1018}
\end{center}
\end{figure}

\begin{figure}
\begin{center}
  \includegraphics[scale=0.6]{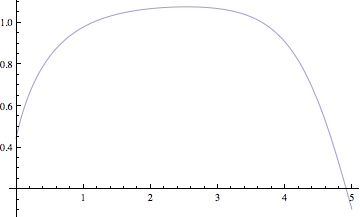}\\
  \caption{ $z(t)$ as a function of time, $0\leq t\leq T$.}\label{mfg1Z1018}
\end{center}
\end{figure}

We let Player $j$'s state evolution be described by a decoupled stochastic differential equation 
\begin{equation}
\nonumber dx_j^n(t)= u_j(t) dt+ \sqrt{\epsilon} \sigma d\mathbb{B}_j(t).
\end{equation}
The risk-sensitive cost functional is given by
\begin{equation}
\nonumber L(\bar{\gamma}_j, m^n; t, \underline{x}, \underline{m})=\delta \log \mathbb{E}_{\underline{x}, \underline{m}}\left\{\exp\left[\frac{1}{\delta}\left(Q(x^n_j)^2 \right.\right.\right.\left.\left.\left.+\int_0^T(q-\mathbb{E}(m^n_t))(x_j^n)^2(t)+\bar{u}_j^2(t)dt\right)\right]\right\},
\end{equation}
where $\delta, Q, q$ are positive parameters; hence coupling of the players is only through the cost. The optimal strategy of Player $j$ has the form of
\begin{equation}\label{agentOS}
\bar{u}_j^*(t)=-z(t)x,
\end{equation}
where $z(t)$ is a solution to the Riccati equation
\begin{equation}
\nonumber \dot{z}(t)+ q-\mathbb{E}(m^n)-z^2(t)(1-\sigma^2/\rho^2)=0,
\end{equation}
with boundary condition $z(T)=Q$.
An explicit solution is given by
\begin{equation}
\nonumber z(t)=-\frac{\sqrt{q-M}}{\sqrt{L}} \text{tan}\left[\sqrt{L} \sqrt{q-M} (t-T)+\right.\left.\text{arctan}\left(\frac{\sqrt{L} Q}{\sqrt{q-M}}\right)\right], 0\leq t \leq T,
\end{equation}
where $L:=1-\sigma^2/\rho^2$ and $M:=\mathbb{E}(m^n)$. The FPK-McV equation reduces to
\begin{equation}
\nonumber \partial_t m_t^* + \partial_x (m_t^* z(t)x(t)) =\frac{\epsilon}{2}\sigma^2\partial_{xx}^2 m^*_t.
\end{equation}
We set the parameters as follows: $q=1.2, Q=0.1$, $\delta=100,000$, $\sigma=2.0$, $T=5$ and $\epsilon=5.0$. Let $m^*_0(x)$ be a normal distribution $\mathcal{N}(1, 1)$ and for every $0\leq t\leq T$,  $m^*_t$ vanishes at infinity. In Figure \ref{mfg1M1018}, we show the evolution of the distribution $m^*_t$ and in Figures \ref{mfg1Mean1018} and \ref{mfg1Variance1018}, we show the mean and the variance of the distribution which affects the optimal strategies in (\ref{agentOS}). The optimal linear feedback $z(t)$ is illustrated in Figure \ref{mfg1Z1018}. We can observe that the mean value $\mathbb{E}(m_t^*)$ monotonically decreases from 1.0 and hence the unit cost on state is monotonically increasing. As the state cost increases, the control effort becomes relatively cheaper and therefore we can observe an increment in the magnitude of $z(t)$. However, when the mean value goes beyond 1.08, we observe that the control effort reduces to avoid undershooting in the state.

\subsection{McKean-Vlasov dynamics}
We let the dynamics of an individual player be
\begin{equation}\label{exdynamics}
dx_j^n(t) = \left( \frac{\beta}{n}\sum_{i=1}^n x_i^n(t) + u_j^n(t)  \right) dt + \sqrt{\epsilon}\sigma d\mathbb{B}_j(t),
\end{equation}
and take the risk-sensitive cost function to be
\begin{equation}
\nonumber  L = \delta \log E \left\{ \exp \left[ \frac{1}{\delta} \int_{0}^T q (x_j^n(t))^2 + (u_j^n(t))^2\right]\right\}.
\end{equation}

Note that the cost function is independent of other players' controls or states. As $n\rightarrow\infty$, under regularity conditions,
$$
\lim_{n\rightarrow\infty}\sum_{i=1}^n \frac{1}{n} x_i^n(t)= M(t),
$$
where $M(t)$ is the mean of the population. The feedback optimal control $\bar{u}_j$ in response to the mean field $M(t)$ is characterized by
\begin{equation}
\nonumber \bar{u}_j(t) = -z(t) x_j(t) - k(t),
\end{equation}
where
\begin{eqnarray}
\nonumber\dot{z}(t) + q - z^2 (1-\sigma^2/\rho^2) &=&0, \ \ z(T) = 0, \\
\nonumber\dot{k}(t) - z(t) k(t) + z(t) M(t) &=&0, \ \ k(T)=0, 
\end{eqnarray}
and $\rho^2=\frac{\delta}{2\epsilon}$ and $M(t)=\int_{x\in \mathcal{X}} xm(x,t) dx$. The Fokker-Planck-Kolmogorov (FPK) equation is
\begin{equation}
\nonumber\partial_t m(x, t) + \partial_x \left(m(x,t)   \left(-z(t) x(t) -k(t) \right)+ \beta m(x,t)\int_w wm(w, t) dw\right)  = \frac{\epsilon}{2}\sigma^2 \partial^2_{xx} m(x,t)
\end{equation}

By solving the ODEs, we find that
$$
z(t)=  -\sqrt{\bar{q}} \textrm{tan} \left( \sqrt{\hat{q}}(t-T)\right), \ \   0 \leq t \leq T.
$$
where $\bar{q} = q/ (1-\sigma^2/\rho^2)$ and $\hat{q} =  q (1-\sigma^2/\rho^2)$. Let $q=r=1$ and we find the solution
$$
k(t)= \textrm{cos}(t-T) \left( \int_1^T m(\tau)\textrm{sec}(T-\tau)\textrm{tan}(T-\tau) d\tau
-\int_t^T m(\tau')\textrm{sec}(T-\tau')\textrm{tan}(T-\tau') d\tau'
\right).
$$
Let $\sigma=1, \rho=2, \beta=1$ and we show in Figure \ref{IFACSimulationMV3fig1} the evolution of the probability density function $m(x, t)$. The mean $M(t)$ and the variance are shown in Figure \ref{IFACSimulationMV3fig2} and Figure \ref{IFACSimulationMV3fig3}, respectively.
\begin{figure}
\begin{center}
  \includegraphics[scale=0.8]{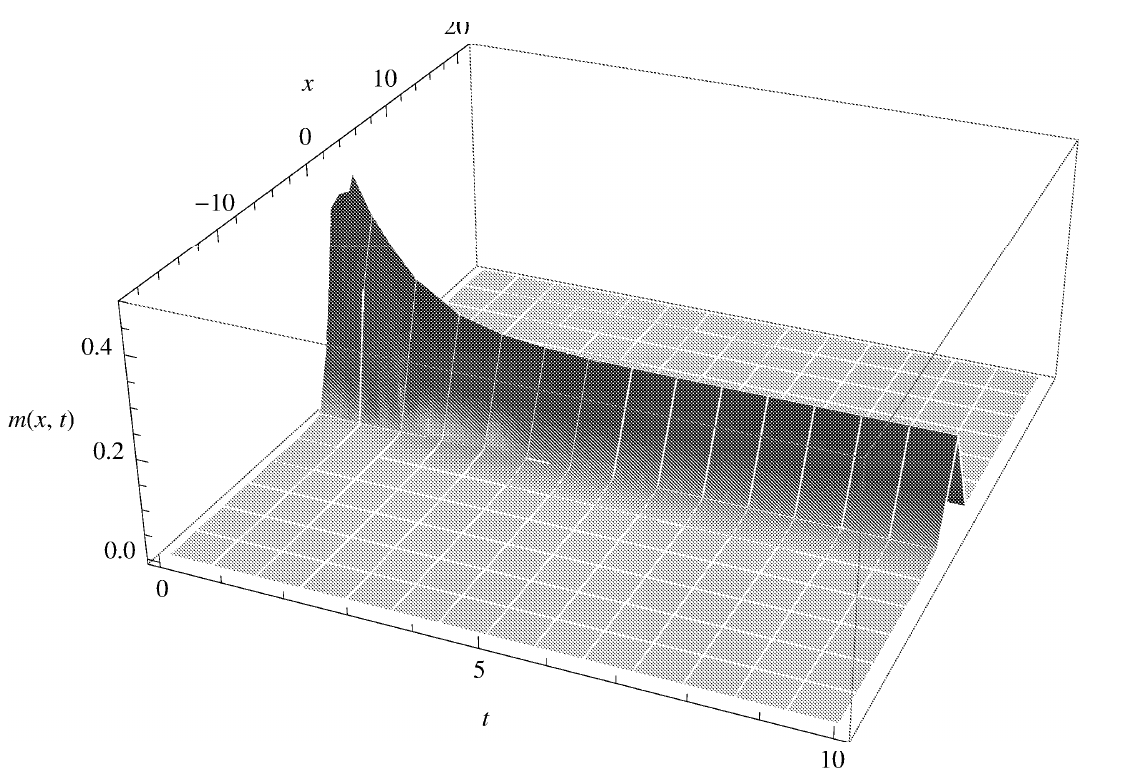}\\
  \caption{Evolution of the probability density function $m(x,t)$}\label{IFACSimulationMV3fig1}
\end{center}
\end{figure}

\begin{figure}
\begin{center}
  \includegraphics[scale=0.8]{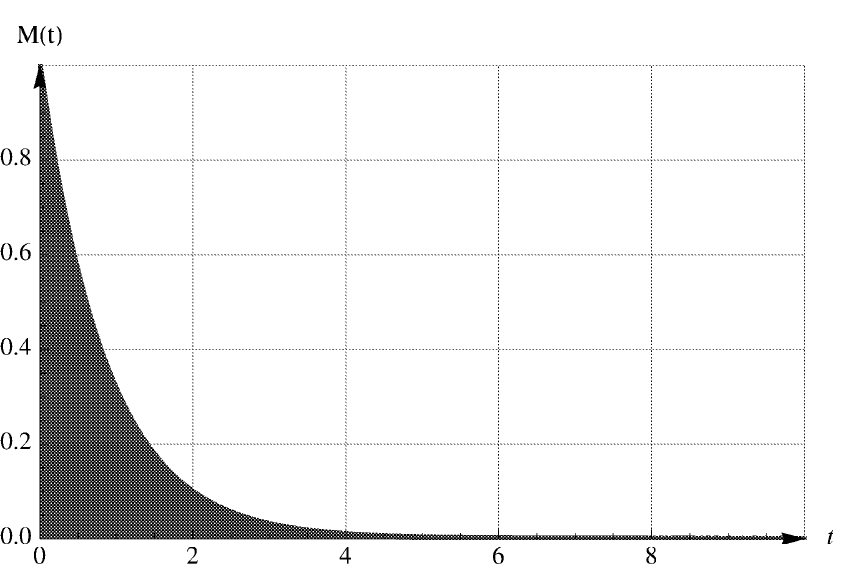}\\
  \caption{The mean $M(t)$ under equilibrium solution}\label{IFACSimulationMV3fig2}
\end{center}
\end{figure}

\begin{figure}
\begin{center}
  \includegraphics[scale=0.8]{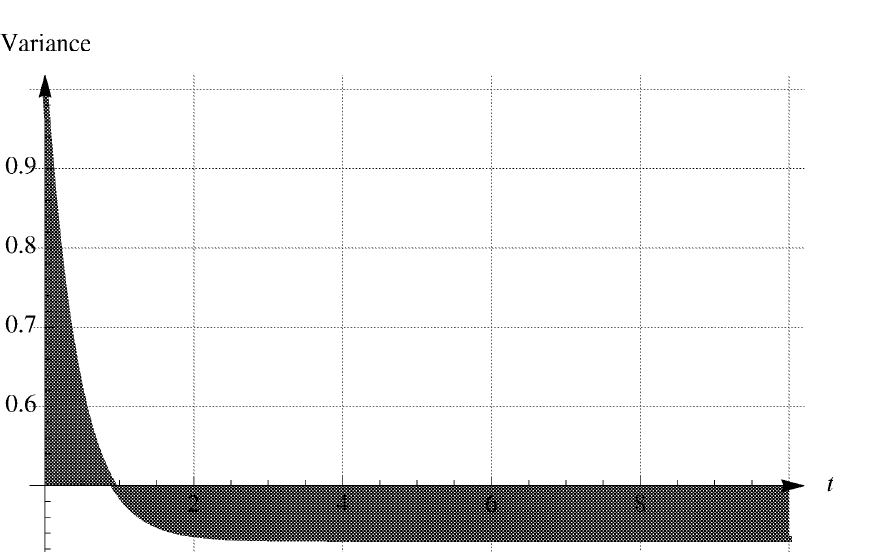}\\
  \caption{Variance over time under equilibrium solution}\label{IFACSimulationMV3fig3}
\end{center}
\end{figure}

\section{Concluding remarks} \label{tsec5}
We have studied risk-sensitive mean-field stochastic differential games with state dynamics  given by an It\^o stochastic differential equation and the cost function being the expected value of an exponentiated integral.

 Using a particular structure of state dynamics,
we have shown that the mean-field limit of the individual state dynamics leads to a controlled
macroscopic McKean-Vlasov equation. We have formulated a risk-sensitive mean-field response framework, and established its
compatibility with the density distribution using the controlled Fokker-Planck-Kolmogorov forward equation. The risk-sensitive mean-field equilibria are characterized by coupled backward-forward
equations. For the general case, the resulting mean field system is very hard to solve (numerically or analytically) even if the number of equations have been reduced. We have, however, provided generic explicit forms in the particular case of  the affine-exponentiated-Gaussian mean-field problem. In addition, we have shown that the risk-sensitive problem can be transformed into a risk-neutral mean-field game problem with the introduction of an additional fictitious player.  This allows one to study a novel class of mean field games, {\it robust mean field games}, under the Isaacs condition.

 An interesting direction that we leave for future research is to extend the model to accommodate  multiple classes of players and a drift function which may depend on the other players' controls. {\color{black} Another direction would be to soften the conditions under which Proposition 5 is valid, such as boundedness and Lipschitz continuity, and extend the result to games with non-smooth coefficients. In this context, one could address a mean field central limit question on the asymptotic behavior of the process $\sqrt{n}\mathbb{E}\left( \parallel x^n_j(t)-\tilde{x}_j(t)\parallel\right).$
 }
 Yet another extension would be to the time average risk-sensitive cost functional. Finally, the approach needs to be compared with other risk-sensitive approaches such as the {\it mean-variance criterion} and extended to the case where the drift is a function of the state-mean field and the control-mean field. 

\bibliographystyle{plain}

\bibliography{biblioMF1}

\appendix

%

{\color{black}
\begin{proof}[Proof of Proposition~\ref{prottt1}]
Under the stated standard assumptions on the drift $f$ and variance $\sigma$,
the forward stochastic differential equation has a unique solution adapted to the filtration generated by the Brownian motions.
We want to show that
$$\mathbb{E}\left( \sup_{t\in [0,T]}\parallel x^n_j(t)-\tilde{x}_j(t)\parallel\right)
\leq \frac{C_T}{\sqrt{n}},
$$
where $C_T$ is a positive number which only depends on the bounds, $T$ and the Lipschitz
constants of the coefficients of the drifts and the variance term.
First we observe that for a fixed control $u,$ the averaging terms $\frac{1}{n}\sum_{i=1}^n f(t,x_j,u,x_i)$ and $\frac{1}{n}\sum_{i=1}^n \sigma(t,x_j,u,x_i)$ are measurable, bounded and Lipschitz with the respect to the state  and uniformly with the respect to time.

Second, we observe that the bound on the Lipschitz constants of the coefficients do not depend on the population size $n.$

Hence, $\int f(t, x,u,x')\ m_t(dx')$  and $\int \sigma(t, x,u,x')\ m_t(dx')$ are bounded and Lipschitz uniformly with the respect to $t.$ Moreover, these coefficients are deterministic. This means that there is a unique solution to the limiting SDE and that solution is measurable with the filtration generated by the mutually independent Brownian motions.

Third,  we evaluate the gap between the coefficients in order to obtain an estimate of the two processes.
We start by evaluating the gap
$$
\mathbb{E}\left(\left\| \frac{1}{n}\sum_{i=1}^n f(t,x,u,x_i) -\int f(t,x,u,x')\ m_t(dx')\right\|^2\right)
$$

Notice that $f$ returns a $k-$dimensional vector and $x$ belongs to $\mathbb{R}^k$.
By reordering the above expression (in $2-$norm), we obtain
$$\sum_{l=1}^k \mbox{var}\left(\frac{1}{n}\sum_{i=1}^n f_{l}(t, x_j,u,x_i)\right)\leq \frac{k}{n} (1+\max_{l}b_l)^2\leq \frac{C_T}{n},$$ where $\mbox{var}(X)$ denotes the variance of $X$ and $b_l$ is a bound on the $l-$th component of the drift term. (This exists because we have assumed boundedness conditions on the coefficients).

Following a similar reasoning, we obtain the bounds on the second term in $\sigma$, i.e.,
$$\sum_{l,l'} \mbox{var}\left(\frac{1}{n}\sum_{i=1}^n \sigma_{ll'}(t, x_j,u,x_i)\right)\leq \frac{k}{n} (1+\max_{ll'}c_{ll'})^2\leq \frac{C_T}{n},$$
where $c_{ll'}$ is a bound on the entries $(l,l')-$ of the matrix $\sigma$.

Now we use the Lispchitz conditions and standard Gronwall estimates to deduce that
the mean of the quadratic gap between the two stochastic processes (starting from $x$ at time $0$)  is in order of $\frac{1}{n}.$

\end{proof}
}

{\color{black}
\begin{proof}[Proof of Theorem \ref{thmt1}]
Under the stated regularity and boundedness assumptions, there is a solution to the McKean-Vlasov FPK equation. Suppose that (i) and (ii) are satisfied.
 Then, $m_t=m^*(t,u^*(t))$ is the solution of the mean-field limit state dynamics, i.e., the macroscopic McKean-Vlasov PDE when $m$ is substituted into the HJB equation. By fixing  $f^*, c^*,\sigma,$ we obtain a novel HJB equation for the  mean-field stochastic game.
Since the new PDE admits a solution according to (ii), the control $u^*(t)=u(t,x)$ minimizing $\partial_xv \cdot f+c,$ is a best response to $m^*$  at time $t.$  The optimal response of the individual player generates a mean-field limit which in law is a solution of the FPK PDE and the players compute their controls as a function of this mean-field. Thus, the consistency between the control, the state and the mean field is guaranteed by assumption (i). It  follows that $(u^*,m^*)$ is a solution to the fixed-point problem i.e., a mean-field equilibrium,  and a strongly time-consistent one.

Now, we look at  the quadratic instantaneous cost case. In that case, we obtain the risk-sensitive  equations provided in Proposition 3. The fact that any convergent subsequence of best-response to $m^n$ is a best response to $m^*$ and the fact that $u^*$ is an $\epsilon^*-$best response to the mean-field limit $m^*$ follow from mean-field convergence of order $O\left(\frac{1}{\sqrt{n}}\right)$ and the continuity  of the risk-sensitive quadratic cost functional.
\end{proof}
}
\begin{proof}[Proof of Theorem \ref{secondmainresult}]

We  provide a sufficient condition for the risk-sensitive mean field game to have at most one smooth solution.
Suppose $\delta>0,$ and $\sigma$ is positive constant. Let $H$ be the Hamiltonian associated with the risk-neutral mean field system. Then the Hamiltonian for the risk-sensitive mean field system is $\tilde{H}(x,p,m)=H+(\frac{\epsilon\sigma^2}{2\delta})\parallel p\parallel^2.$ Assume that the dependence on $m$ is local, i.e., it is function of $m(x).$

 The generic expression for the optimal control  is given by
$u^*= \partial_p{H}(x,\partial_xv,m_t(x))$ (note that the generic feedback control is expressed in terms of $H$, and not of $\tilde{H}$).

Suppose that there exist two smooth solutions $(\hat{v}_1,\hat{m}_1),$ $(\hat{v}_2,\hat{m}_2)$ to the (normalized) risk-sensitive mean field system. Now, consider the function $t\longmapsto \int_{x\in\mathcal{X}} (\hat v_2(x)-\hat v_1(x))(\hat m_2(x)-\hat m_1(x))dx.$ Observe that this function is $0$ at time $t=0$ because the measures coincide initially, and the function is equal to $0$ at time $t=T$ because the final values coincide. Therefore, the function will be identically $0$ in $[0,T]$ if we show that it is monotone. This will imply that the integrand is zero, and hence one of the two terms $(\hat v_{2}(x)-\hat v_{1}(x))$ or $(\hat m_{2,t}(x)-\hat m_{1,t}(x))$ should be $0.$ Then, if the measures are identical, we use the HJB equation to obtain the result. If the value functions are identical, we can use the FPK equation to show the uniqueness of the measure.  Thus, it remains to find a sufficient condition for monotonicity, that is,
a sufficient condition under which the quantity $\int_{x\in\mathcal{X}} (\hat v_2(x)-\hat v_1(x))(\hat m_2(x)-\hat m_1(x))dx$ is monotone in time. We compute the following time derivative:
$$S(t):=\frac{d}{dt}\left[ \int_{x\in\mathcal{X}} (\hat v_2(x)-\hat v_1(x))(\hat m_2(x)-\hat m_1(x))dx \right].$$
We interchange the order of the integral  and the differentiation and use time derivative of a product to arrive at;
\begin{eqnarray}\nonumber
S(t)
&=& \int_{x\in\mathcal{X}} (\partial_t \hat v_2-\partial_t \hat v_1)(\hat m_2(x)- \hat m_1(x))dx + \\
\nonumber & & \int_{x\in\mathcal{X}}(\hat v_2-\hat v_1)(\partial_t \hat m_2(x)-\partial_t \hat m_1(x)) dx
\end{eqnarray}
Now we expand  the first term $ A:=\int_{x\in\mathcal{X}} (\partial_t \hat v_2-\partial_t \hat v_1)(\hat m_2(x)-\hat m_1(x))dx.$
Consider the two HJB equations:
\begin{eqnarray}
\nonumber \partial_t\hat v_1+\tilde{H}(x,\partial_x \hat v_1, \hat m_1(x))+\frac{1}{2}\sigma^2\partial^2_{xx}\hat v_1=0,\\
\nonumber \partial_t\hat v_2+\tilde{H}(x,\partial_x \hat v_2, \hat m_2(x))+\frac{1}{2}\sigma^2\partial^2_{xx}\hat v_2=0
\end{eqnarray}  To compute $A$, we take the difference between the two HJB equations above and multiply by $\hat m_2-\hat m_1,$ which gives
$$
\partial_t \hat v_2 -\partial_t\hat v_1=-\tilde{H}(x,\partial_x \hat v_2, \hat m_2)+ \tilde{H}(x,\partial_x \hat v_1, \hat m_1) -\frac{1}{2}\sigma^2  \partial^2_{xx}\hat v_2+
\frac{1}{2}\sigma^2 \partial^2_{xx}\hat v_1
$$
Hence,
\begin{eqnarray}
\nonumber{A}&:= & \int_x [\partial_t \hat v_2 -\partial_t\hat v_1](\hat m_2(x) -\hat m_1(x)) dx\\
\nonumber&=& -\int_x \tilde{H}(x,\partial_x \hat v_2, \hat m_2)(\hat m_2(x) -\hat m_1(x)) dx\\
\nonumber&& +\int_x \tilde{H}(x,\partial_x \hat v_1, \hat m_1) (\hat m_2(x) -\hat m_1(x)) dx\\
\nonumber&& -\int_x \frac{1}{2}\sigma^2\partial^2_{xx}(\hat v_2)(\hat m_2(x) -\hat m_1(x)) dx
\\ \nonumber && +
\int_x \sigma^2\frac{1}{2}\partial^2_{xx}(\hat v_1)(\hat m_2(x) -\hat m_1(x)) dx
\end{eqnarray}

Next we expand the second term
$B:=\int_{x\in\mathcal{X}} (\partial_t \hat m_2-\partial_t \hat m_1)(\hat v_2-\hat v_1)dx.$
%
%
%
Note that   the Laplacian terms are canceled by integration by parts in the expression $A+B$.
By collecting all  the terms in $A+B$, we obtain   \begin{eqnarray}
\nonumber A+B &=&  -\int_x \tilde{H}(x,\partial_x \hat v_2, \hat m_2)(\hat m_2(x) -\hat m_1(x)) dx\\
\nonumber&& +\int_x \tilde{H}(x,\partial_x \hat v_1, \hat m_1) (\hat m_2(x) -\hat m_1(x)) dx\\
\nonumber&&+\int_x \hat m_2(x) [\partial_p{H}(x,\partial_x\hat v_2, \hat m_2)](\partial_x\hat v_2-\partial_x\hat v_1)dx\\
\nonumber &&
 -\int_x\hat m_1(x) [\partial_p{H}(x,\partial_x \hat v_1, \hat m_1)](\partial_x\hat v_2-\partial_x\hat v_1)dx
\end{eqnarray}
Letting $S(t)=A+B$, we introduce
$$
\hat m_{\lambda}:=(1-\lambda)\hat m_{1}+\lambda \hat m_{2}=\hat m_{1}+\lambda (\hat m_{2}-\hat m_{1}).
$$ The measure $\hat m_{\lambda}$ starts with $\hat m_{1}$ for the parameter $\lambda=0$ and yields the measure $ \hat m_2$ for $\lambda=1.$ Similarly define
$$
\hat v_{\lambda}:=(1-\lambda)\hat v_{1}+\lambda \hat v_{2}.
$$

Introduce
an auxiliary integral parameterized by $\lambda.$
\begin{eqnarray}
 \nonumber  C_{\lambda}
 &:=&  -\int_x \tilde{H}(x,\partial_x \hat v_{\lambda}, \hat m_{\lambda})(\hat m_{\lambda}(x) -\hat m_1(x)) dx\\
\nonumber && +\int_x \tilde{H}(x,\partial_x \hat v_1, \hat m_1) (\hat m_{\lambda}(x) -\hat m_1(x)) dx\\
\nonumber &&+\int_x \hat m_{\lambda}(x) [\partial_p{H}(x,\partial_x\hat v_{\lambda}, \hat m_{\lambda})](\partial_x\hat v_{\lambda}-\partial_x\hat v_1)dx\\
  \nonumber &&
 -\int_x\hat m_1(x) [\partial_p{H}(x,\partial_x \hat v_1, \hat m_1)](\partial_x\hat v_{\lambda}-\partial_x\hat v_1)dx
\end{eqnarray} Substituting the terms $\hat v_{\lambda}-\hat v_1=\lambda (\hat v_{2}-\hat v_1)$ and
$\hat m_{\lambda}-\hat m_1=\lambda (\hat m_{2}-\hat m_1),$ we obtain
\begin{eqnarray}
\nonumber \frac{C_{\lambda}}{\lambda}
 &:=&  -\int_x \tilde{H}(x,\partial_x \hat v_{\lambda}, \hat m_{\lambda})(\hat m_{2}(x) -\hat m_1(x)) dx\\
\nonumber && +\int_x \tilde{H}(x,\partial_x \hat v_1, \hat m_1) (\hat m_{2}(x) -\hat m_1(x)) dx\\
\nonumber&&+\int_x \hat m_{\lambda}(x) [\partial_p{H}(x,\partial_x\hat v_{\lambda}, \hat m_{\lambda})](\partial_x\hat v_{2}-\partial_x\hat v_1)dx\\
\nonumber &&
 -\int_x\hat m_1(x) [\partial_p{H}(x,\partial_x \hat v_1, \hat m_1)](\partial_x\hat v_{2}-\partial_x\hat v_1)dx
\end{eqnarray}

Using the continuity of the terms (of the RHS) above and the compactness of $\mathcal{X},$ we deduce that
$$ \lim_{\lambda\longrightarrow 0}\frac{C_{\lambda}}{\lambda}=0.
$$
{\color{black}
 We next find a condition under which  the one-dimensional function $ \lambda \longmapsto \frac{C_{\lambda}}{\lambda}$ is monotone in $\lambda.$
 We need to compute the variations of
 $$\frac{d}{d\lambda}\left( \frac{C_{\lambda}}{\lambda}\right).$$ Suppose that
$(x,p,m)\longmapsto \tilde{H}(x,p,m)$ is twice continuously differentiable with the respect to $(p,m).$ Then,
 \begin{eqnarray}
\nonumber  \frac{d}{d\lambda}\left( \frac{C_{\lambda}}{\lambda}\right) &= & -\int_x
  \left[\partial_p\tilde{H}(x,\partial_x \hat v_{\lambda}, \hat m_{\lambda})(\partial_x\hat v_{2}-\partial_x\hat v_1)\right](\hat m_{2}(x) -\hat m_1(x)) dx\\ \nonumber && -\int_x
  \left[ \partial_m\tilde{H}(x,\partial_x \hat v_{\lambda}, \hat m_{\lambda})(\hat m_{2}(x) -\hat m_1(x)) \right](\hat m_{2}(x) -\hat m_1(x)) dx\\
  \nonumber &&
  +\int_x \partial_{\lambda}\left( \hat m_{\lambda}(x)[\partial_p{H}(x,\partial_x\hat v_{\lambda}, \hat m_{\lambda})]\right)(\partial_x\hat v_{2}-\partial_x\hat v_1)dx
 \end{eqnarray}
 \begin{eqnarray}
 \nonumber \frac{d}{d\lambda}\left( \frac{C_{\lambda}}{\lambda}\right) &= & -\int_x
  \partial_p\tilde{H}(x,\partial_x \hat v_{\lambda}, \hat m_{\lambda})(\partial_x\hat v_{2}-\partial_x\hat v_1)(\hat m_{2}(x) -\hat m_1(x)) dx\\
  \nonumber&&
  -\int_x
  \partial_m\tilde{H}(x,\partial_x \hat v_{\lambda}, \hat m_{\lambda})(\hat m_{2}(x) -\hat m_1(x))^2 dx \\ \nonumber
  &&
  +\int_x(\hat m_{2} -\hat m_1)
  [\partial_p{H}(x,\partial_x\hat v_{\lambda}, \hat m_{\lambda})]
  (\partial_x\hat v_{2}-\partial_x\hat v_1)dx
  \\ \nonumber &&
  +\int_x \hat m_{\lambda}
  \partial_{\lambda}
  [\partial_p{H}(x,\partial_x\hat v_{\lambda}, \hat m_{\lambda})]
  (\partial_x\hat v_{2}-\partial_x\hat v_1)dx
 \end{eqnarray}

{Computation of the term $ \hat m_{\lambda}(x)\partial_{\lambda}\left(  [\partial_p{H}(x,\partial_x\hat v_{\lambda}, \hat m_{\lambda})]\right)$} yields
 \begin{eqnarray} \nonumber
 D_{\lambda} &=&
\partial_{\lambda} [\partial_p{H}(x,\partial_x\hat v_{\lambda}, \hat m_{\lambda})]\\
\nonumber &=&\partial^2_{pp}{H}. (\partial_x\hat v_{2}-\partial_x\hat v_1)+\partial^2_{mp} {H}. (\hat m_{2} -\hat m_1)
 \end{eqnarray}
and we obtain
\begin{eqnarray}
 \nonumber \frac{d}{d\lambda}\left( \frac{C_{\lambda}}{\lambda}\right) &= & -\int_x
  \partial_p\tilde{H}(x,\partial_x \hat v_{\lambda}, \hat m_{\lambda})(\partial_x\hat v_{2}-\partial_x\hat v_1)(\hat m_{2}(x) -\hat m_1(x)) dx\\
  \nonumber &&
  -\int_x
  \partial_m\tilde{H}(x,\partial_x \hat v_{\lambda}, \hat m_{\lambda})(\hat m_{2}(x) -\hat m_1(x))^2 dx \\
\nonumber  &&
  +\int_x(\hat m_{2} -\hat m_1)
  [\partial_p{H}(x,\partial_x\hat v_{\lambda}, \hat m_{\lambda})]
  (\partial_x\hat v_{2}-\partial_x\hat v_1)dx
  \\ \nonumber &&
  +\int_x \hat m_{\lambda} \partial^2_{pp}{H}. (\partial_x\hat v_{2}-\partial_x\hat v_1)^2+\hat m_{\lambda}\partial^2_{mp} {H}. (\hat m_{2} -\hat m_1)(\partial_x\hat v_{2}-\partial_x\hat v_1)
 \end{eqnarray}

The first and the third lines differ by $$ -\int_x\left(\frac{\epsilon\sigma^2}{\delta}\langle .,\nabla_x \hat v \rangle\right) \left(\partial_x\hat v_{2}-\partial_x\hat v_1\right)\left(\hat m_{2}(x) -\hat m_1(x)\right) dx.
$$

 Hence, we obtain
\begin{eqnarray}
\nonumber  \frac{d}{d\lambda}\left( \frac{C_{\lambda}}{\lambda}\right) &= &
  \int_x m_{\lambda}( \partial_x\hat v_{2}-\partial_x\hat v_1, \hat m_{2} -\hat m_1)\left(\begin{array}{cc}
 a_{11}& a_{12} \\
 a_{21}&  a_{22}
\end{array} \right) \left(\begin{array}{c}
 \partial_x\hat v_{2}-\partial_x\hat v_1\\ \hat m_{2} -\hat m_1
\end{array} \right)dx,
  \end{eqnarray}
  where
  $$
  a_{11}: = \partial_{pp}^2 {H},
  $$

  $$a_{21} :=\frac{1}{2}\partial_{mp}^2 \tilde H=\frac{1}{2}\partial_{mp}^2  H-\frac{\epsilon\sigma^2}{2\delta}p/m,
  $$

  $$a_{21}:= \frac{1}{2}(\partial_{pm}^2 \tilde H)'-\frac{\epsilon\sigma^2}{2\delta}\frac{p}{m}=\frac{1}{2}(\partial_{pm}^2  H)'-\frac{\epsilon\sigma^2}{2\delta}\frac{p}{m},$$

  $$a_{22}:=- \frac{\partial_m \tilde{H}}{m}.$$

Suppose that for all $(x,p,m)\in \mathcal{X}\times \mathbb{R}^d\times \mathbb{R}_{+},$ the matrix
$$\left( \begin{array}{cc}
a_{11} & a_{12}\\
a_{21} & a_{22}
\end{array}\right)\succ 0.
$$
Then, the monotonicity follows, and this completes the proof.
}
\end{proof}
\end{document}